\documentclass{lmcs}
\keywords{impossible algorithms, {I}shihara's tricks, constructive analysis}

\usepackage{hyperref}
\input{base}

\usepackage[inline]{enumitem}

\theoremstyle{definition}

\usetikzlibrary{decorations.pathreplacing}
\newcommand{\IVTast}{IVT\(^\ast\)}


\begin{document}

\title[(Seemingly) Impossible Theorems]{(Seemingly) Impossible Theorems in Constructive Mathematics}

\author[H.~Diener]{Hannes Diener}	
\address{School of Mathematics and Statistics, University of Canterbury, Christchurch, New Zealand}	
\email{hannes.diener@canterbury.ac.nz}  

\author[M.~Hendtlass]{Matthew Hendtlass}	
\address{School of Mathematics and Statistics, University of Canterbury, Christchurch, New Zealand}
\email{matthew.hendtlass@canterbury.ac.nz}  





\begin{abstract}
  \noindent 	We prove some constructive results that on first and maybe even on second glance seem impossible.
\end{abstract}

\maketitle

\section{Introduction}

One of the adjustments that all practitioners of constructive mathematics  coming from classical mathematics  (CLASS) have to make is not to rely on true-false case distinctions, such as 
\[ \fa{x \in \RR}{x = 0 \lor x  \neq 0}  \ ; \]
which in the classical treatment are simply an appeal to the law of excluded middle (\LEM). Not only are these fragments of \LEM unavailable to the constructive mathematician, they are often outright false in many varieties of constructive mathematics, such as Brouwer's intuitionism (INT), Russian Recursive Mathematics (RUSS), or realizability models. One might easily gain the impression that it is altogether impossible to prove any interesting, general, and non-trivial disjunction over intuitionistic logic. This is what makes the collection of results in this paper intriguing, since they are all of the restricted \LEM-form 
\[ \fa{x \in \mathbb{D}}{A(x) \lor \lnot A(x)}  \ . \]
We  refer to these theorems as ``impossible theorems''. Of course they are not actually impossible, merely surprising. There does not seem to be a master lemma from which all of our impossible theorems follow, but they all share the same basic proof structure, a kind of ``positive diagonalisation'': a function or assertion is applied to yield information that is used to build another object, this object is then fed into the original function or assertion to ``force'' it to give up the information we desire. This resembles diagonalisation, with the difference that the goal is not to construct an object that produces an absurdity, but rather one that produces positive information.\footnote{One could argue that the solutions of Smullyan's well known ``Knights and Knaves'' puzzles \cite{rS90}  also require a sort of positive diagonalisation. } This technique has a rich history in constructive mathematics and can be found in the proofs of many named theorems  (see Section 12 of \cite{mE13b} for a quick overview). The algorithms embodied in these proofs have been dubbed ``(seemingly) impossible programs'' \cite{aB07}, and our present work should be seen as a continuation of that tradition. 

The results we prove are the following.
\begin{itemize}
  \item An improvement on Ishihara's second trick, which interpreted very loosely says that, under some reasonable conditions, we can decide whether a function is sequentially continuous or not.
  \item A version of the approximate intermediate value theorem that does not assume continuity; together with some consequences.
  \item A result on decomposing sets.
  \item Implications of a set being neatly located.
  \item A theorem that allows us to decide whether the infimum of a quasi-convex, inf-located function, point-wise positive function is positive or zero.
\end{itemize}

These results are delightful in their own right and generalise the main results of \cite{hI91}, \cite{dB01}, and \cite{jB16}, but apart from this, we see our work as furthering constructive mathematics in a few, more general, ways.
\begin{itemize}
\item One reason that Bishop was able to develop large parts of analysis constructively \cite{eB67} without getting caught up in philosophical debates---as Brouwer and his followers were before him  (``Grundlagenstreit'')---was that he largely leapfrogged foundational issues by considering only very restricted classes of objects. For example in \cite{eB67} Bishop only considered functions \(\RR \to \RR\) that are uniformly continuous on compact sets\footnote{And---in a cunning marketing move---he simply called continuous.} and never dealt with other ones. He thereby avoided any philosophical argument of whether, say, bar-induction is valid, or whether all functions are continuous. While this was unquestionably a tremendously successful way of getting things done, it left large classes of objects under-investigated, such as  point-wise continuous functions, or functions that do not inherently come with a modulus of continuity. 
All of our results are only interesting when applied to those larger domains.
 By doing this, we not only extend existing results, we more importantly  also clarify which properties actually matter.
\item This reduction of assumptions also means that results are not just applicable in traditional varieties of mathematics (CLASS, INT, RUSS), but  also apply to more exotic and harder to formalise scenarios. For example we may imagine a function being computed by a black-box (say in the cloud). Even if we do not know how the function is computed (maybe with random inputs, maybe with human intervention), we are still able to extract some meta-information about the function without ``opening up the box''. 
\item Returning to more traditional varieties, our results also allow us to prove many results \emph{implicitly} from weak assumptions rather than \emph{explicitly} from strong principles. For example, the fact that all functions \([0,1] \to \RR\) in INT are sequentially continuous can be deduced from \(\neg \LPO\) and \(\WMP\) via Ishihara's tricks \cite{hI91,hD17}, rather than from principles that come with more philosophical baggage, such as continuous choice. The same proof will also work in  RUSS, and without assuming that we know for each function \([0,1] \to \RR\) the G\"odel number of a Turing Machine which realizes it.

Another example is Corollary \ref{Cor:epssteps} (an increasing function \([0,1] \to \RR\) can only make finitely many \(\varepsilon\)-steps for a given \(\varepsilon >0\)), which is easily provable in both classical mathematics and in ``truly'' constructive schools (CON) of mathematics (i.e.\ those in which \LPO fails). However direct proofs in both cases would be greatly different: In CLASS we can use a simple argument that the set of \(\varepsilon\)-steps has to be finite (otherwise the increasing function would grow too much), whereas in CON the result is true, since, there, \emph{all} functions are continuous, and  all increasing, point-wise continuous functions on the unit interval are automatically uniformly continuous \cite[Theorem 13.6]{mM83}. Our proof works in all situations, and without assuming \LPO or its negation.
\end{itemize}

\section{Technical Preliminaries}

We are working in Bishop style constructive mathematics (BISH)---that is, mathematics using intuitionistic logic and an appropriate set-theoretic or type-theoretic foundation such as \cite{pA01}. In the tradition of Bishop we are favouring clarity of presentation over rigid formalisms, but we do at the same time believe that it is merely a mechanical exercise to formalise our results in an appropriate setting. Not choosing a particular formal framework also means that our results are transferrable to a wide range of varieties of mathematics---even non-traditional ones---as outlined above.

We will use countable and dependent choice, but we will do so explicitly. 

In \cite{hD17} the authors analysed a construction used in many clever constructive results. It also features repeatedly in this paper, starting with the first lemma below, so it seems worth re-introducing the notation.
\begin{Definition}
	Let \( X\) be a set.  For a sequence \( x = (x_{n})_{n \geqslant 1} \) in \( X \), a point \( x_{\infty} \in X \), and an increasing binary sequence \( \lambda = (\lambda_{n})_{n \geqslant 1} \), we define a sequence \( \lambda \circledast x  \)   by
 \[ 
   (\lambda \circledast x )_{n} = \begin{cases}  
               x_{m} & \text{if } \lambda_{n} = 1 \text{ and } \lambda_{m}=1- \lambda_{m+1}  \\ 
               x_{\infty} & \text{if } \lambda_{n} = 0  \ .
                                                    \end{cases}
 \] 
\end{Definition}
If \( (X, \rho) \) is a metric space and \( x_n \to x_\infty\), then it is easy to show that for any increasing binary sequence \( \lambda \) the sequence \( \lambda \circledast x  \) is automatically a  Cauchy sequence \cite[Lemma 2.1]{hD12b}.

We make frequent reference to two classically valid ``omniscience principles'', the limited principle of omniscience (\LPO) and the weak limited principle of omniscience (\WLPO).
 \begin{itemize}
  \item[] \LPO: \label{PR:LPO} for every binary sequence \( (a_n)_{n \geqslant 1} \) we have \[ \fa{n \in \NN}{a_n =0} \lor \ex{n \in \NN}{a_n = 1} \ . \]
  \item[] \WLPO: \label{PR:WLPO} for every binary sequence \( (a_n)_{n \geqslant 1} \) we have \[ \fa{n \in \NN}{a_n =0} \lor \neg\fa{n \in \NN}{a_n = 0} \ .\]
 \end{itemize}
In the presence of countable choice \LPO is equivalent to \(\fa{x \in \RR}{x < 0 \lor x = 0 \lor x > 0}\) and \WLPO is equivalent to \[\fa{x \in \RR}{x = 0 \lor \neg(x = 0) } \ . \]

The following lemma is very much folklore, at least the \WLPO part, however we were unable to find it in the literature. It is very similar to its counterpart for functions \(\BS \to \NN\) as proven in \cite{mA02}. Here and in the following a function \(f:X \to Y\) between two metric \((X, \sigma)\) and \((Y, \rho)\) is called \define{strongly extensional} if \(\fa{x,y \in X}{f(x) \neq f(y) \implies x \neq y}\), or to be more precise, \[ \fa{x,y \in X}{\rho(f(x), f(y)) > 0 \implies \sigma(x, y) > 0}\ . \]

\begin{Lemma} \label{Lem:DiscImplOmniscience} 
 The existence of a discontinuous\footnote{We call a function \(f:X \to Y \) between metric spaces \define{discontinuous}, if there exists \(x_n \to x\) and \(\varepsilon >0\) such that \(\rho(f(x_n),f(x)) > \varepsilon \) for all \( n \).} function \(f:[0,1] \to \RR \) is equivalent to  \WLPO. The existence of a strongly extensional discontinuous function \(f:[0,1] \to \RR \) is equivalent to  \LPO. 
\end{Lemma}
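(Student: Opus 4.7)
The plan is to reduce each equivalence to the real-valued forms of \WLPO and \LPO noted after their definitions. The reverse implications are straightforward, so I would start there: given \WLPO in the form \(\fa{x \in \RR}{x = 0 \lor \neg(x = 0)}\), put \(f(x) = 0\) when \(x = 0\) and \(f(x) = 1\) when \(\neg(x = 0)\); this is well-defined, and applied to \(x_n = 1/n \to 0\) witnesses discontinuity. The same \(f\) works from \LPO (using trichotomy) and is strongly extensional, since \(f(x) \neq f(y)\) forces one of \(x, y\) to be \(0\) and the other to be apart from \(0\), hence \(x \neq y\).

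For the forward direction, suppose \(f \co [0,1] \to \RR\) is discontinuous, witnessed by \(x_n \to x_\infty\) in \([0,1]\) and \(\varepsilon > 0\) with \(\abs{f(x_n) - f(x_\infty)} > \varepsilon\) for all \(n\). Given a binary sequence \((a_n)\), set \(\lambda_n = \max(a_1, \ldots, a_n)\); this is increasing, so by the remark preceding the lemma \(\lambda \circledast x\) is Cauchy and hence converges to some \(y_\infty \in [0,1]\). If every \(a_n = 0\) then \(y_\infty = x_\infty\); if some \(a_n = 1\) then \(\lambda\) stabilises from index \(m+1\) onwards and \(y_\infty = x_m\), giving \(\abs{f(y_\infty) - f(x_\infty)} > \varepsilon\). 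A cotransitivity split of the real \(\abs{f(y_\infty) - f(x_\infty)}\) at thresholds \(\varepsilon/3\) and \(\varepsilon/2\) therefore yields \WLPO: in the \(<\varepsilon/2\) case no \(a_n\) can be \(1\), so \(\fa{n}{a_n = 0}\); in the \(>\varepsilon/3\) case, \(\fa{n}{a_n = 0}\) would force \(y_\infty = x_\infty\) and hence \(\abs{f(y_\infty) - f(x_\infty)} = 0\), a contradiction.

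For the \LPO part, assume further that \(f\) is strongly extensional. The first subcase is unchanged. In the second, strong extensionality upgrades \(f(y_\infty) \neq f(x_\infty)\) to \(y_\infty \neq x_\infty\); setting \(\delta = \abs{y_\infty - x_\infty} > 0\) and choosing \(N\) with \(\abs{x_n - x_\infty} < \delta\) for all \(n > N\), the stabilisation index \(m\) (if \(\lambda\) ever switches) satisfies \(\abs{x_m - x_\infty} = \delta\) and so must lie in \(\{1, \ldots, N\}\). A bounded search through \(a_1, \ldots, a_{N+1}\) then exhibits the required \(n\) with \(a_n = 1\). The only delicate point is the two-threshold case split, which is tuned precisely to funnel each behaviour of \((a_n)\) into the right disjunct; the rest is standard \(\circledast\)-machinery.
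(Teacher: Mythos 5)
Your proposal is correct and follows essentially the same route as the paper's proof: the indicator function of \(\{0\}\) built from the real-number form of \WLPO (respectively \LPO) for one direction, and the \(\circledast\)-construction with a cotransitivity split plus the strong-extensionality upgrade for the other. The only differences are cosmetic---you make the reduction to increasing sequences explicit via running maxima and split at \(\nicefrac{\varepsilon}{3},\nicefrac{\varepsilon}{2}\) where the paper splits at \(0,\varepsilon\).
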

\begin{proof}
Under the assumption of countable choice, if \WLPO holds,  we have \(\fa{x \in \RR}{x = 0 \ \lor \ \neg(x=0)}\), so the discontinuous function \(f:[0,1] \to \RR\) given by 
	\[ f(x) = \begin{cases}
		0 & \text{if } x = 0 \\ 1 & \text{if } \neg (x=0) 
	\end{cases} \]
	is well-defined. If in addition \LPO holds, it is also easily seen to be strongly extensional. 
	
Conversely assume that \( f: [0,1] \to \RR \) is discontinuous: there exist \(x_n \to x\) and \(\varepsilon >0\) such that \(\abs*{f(x_n) - f(x)} > \varepsilon \) for all \( n \). Furthermore let \((\lambda_n)_{n \geqslant 1}\) be an increasing binary sequence. Using the notation introduced above, consider the Cauchy sequence \( (\lambda_n) \circledast (x_n) \), and let \(z \in [0,1] \) be its limit. Then \(\abs*{f(z) - f(x)} < \varepsilon \) or \(\abs*{f(z) - f(x)} > 0 \).

In the first case we must have \(\fa{n \in \NN}{\lambda_n = 0}\), for if \(\lambda_n = 1\) for some \( n \), then \(z = x_m \), where \(\lambda_m = 1-\lambda_{m+1}\), and so \(\abs*{f(z) - f(x)} =  \abs*{f(x_m) - f(x)} < \varepsilon \), which is absurd. In the second case the assumption that \(\fa{n \in \NN}{\lambda_n = 0}\) implies that \(z = x \); it follows from this contradiction that \(\neg \fa{n \in \NN}{\lambda_n = 0}\). Hence \WLPO holds. If in addition \(f\) is strongly extensional, then in that second case \(\abs*{x - z} > 0 \). Now choose \(N\) such that \( \abs*{x - x_n} < \abs*{x - z} \) for all \(n \geqslant N\). For that \(N\) we must have \(\lambda_N=1\). Thus if \(f\) is strongly extensional, \LPO holds.
\end{proof}

Finally, note that by \define{increasing} we mean non-decreasing, rather than strictly increasing.

\section{Ishihara's Tricks}

The results that are nowadays known as Ishihara's tricks \cite{hI91} are both assuming a strongly extensional mapping \(f\) of a complete metric space \(X\) into a metric space \(Y\), and a sequence \((x_{n})_{n \geqslant 1}\)   in \(X\) converging to a limit \(x\).
\begin{Proposition}[Ishihara's first trick] For all positive reals \(\alpha < \beta\), 
 \begin{equation} \label{Eqn:IshFirst}
 	\ex{n \in \NN}{\rho(f(x_{n}),f(x)) > \alpha} \  \lor \  \fa{n \in \NN}{\rho(f(x_{n}),f(x)) < \beta} \ .
 \end{equation} 
\end{Proposition}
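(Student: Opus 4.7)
The plan is to apply the \(\circledast\)-construction from the preliminaries in conjunction with strong extensionality. First, cotransitivity of \(<\) on \(\RR\) together with \(0 < \alpha < \beta\) yields, for each \(n\), that either \(\rho(f(x_{n}),f(x)) > \alpha\) or \(\rho(f(x_{n}),f(x)) < \beta\). Invoking countable choice I would pick \(\mu_{n} \in \menge{0,1}\) so that \(\mu_{n} = 1\) entails \(\rho(f(x_{n}),f(x)) > \alpha\) and \(\mu_{n} = 0\) entails \(\rho(f(x_{n}),f(x)) < \beta\). Setting \(\lambda_{n} = \max_{k \leqslant n} \mu_{k}\) (with a harmless index shift if needed so that the ``transition position'' of \(\lambda\), in the sense of the definition of \(\circledast\), agrees with the least \(k\) for which \(\mu_{k}=1\)) yields an increasing binary sequence, and the desired disjunction reduces to \(\fa{n \in \NN}{\lambda_{n}=0} \lor \ex{n \in \NN}{\lambda_{n}=1}\).

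Next, I would form \(\lambda \circledast x\) and let \(y \in X\) be its limit, which exists by completeness of \(X\). The key feature is that if \(\lambda \equiv 0\) then \(y = x\), while if \(\lambda\) transitions at \(k\) then \(y = x_{k}\), for which \(\mu_{k} = 1\) and so \(\rho(f(x_{k}),f(x)) > \alpha\). A final appeal to cotransitivity, applied this time to \(0 < \alpha\) against \(\rho(f(y), f(x))\), splits into the two sub-cases \(\rho(f(y),f(x)) < \alpha\) and \(\rho(f(y),f(x)) > 0\).

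In the first sub-case, any transition of \(\lambda\) at \(k\) would give \(\rho(f(y),f(x)) = \rho(f(x_{k}),f(x)) > \alpha\), contradicting \(\rho(f(y),f(x)) < \alpha\); hence \(\lambda_{n} = 0\) for every \(n\), so each \(\mu_{n} = 0\) and \(\rho(f(x_{n}),f(x)) < \beta\) for every \(n\), yielding the right-hand disjunct. In the second sub-case, strong extensionality of \(f\) gives \(\sigma(y, x) > 0\), say equal to some \(\delta > 0\); I would then pick \(N\) with \(\sigma(x_{n}, x) < \delta/2\) for all \(n \geqslant N\), and verify that \(\lambda_{N} = 1\) by ruling out \(\lambda_{N} = 0\). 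Indeed, under that assumption every term of \(\lambda \circledast x\) from position \(N\) onward is either \(x\) itself (if \(\lambda\) remains \(0\)) or of the form \(x_{m}\) with \(m \geqslant N\) (if a transition happens later), and is therefore within \(\delta/2\) of \(x\); passing to the limit forces \(\delta = \sigma(y, x) \leqslant \delta/2\), which is absurd.

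The main obstacle is the bookkeeping in the second sub-case: one has to pin down precisely why, once \(\lambda_{N} = 0\) is assumed, every later term of \(\lambda \circledast x\) genuinely lies within \(\delta/2\) of \(x\), exploiting the fact that any transition must then occur at an index \(\geqslant N\). Everything else is a routine combination of cotransitivity, countable choice, and the definition of \(\circledast\).
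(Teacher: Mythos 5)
Your proof is correct, and it is essentially the standard argument for Ishihara's first trick — the very ``usual proof'' the paper leaves to the citation of Ishihara and encapsulates in its \(\circledast\)-construction: decide \(\rho(f(x_n),f(x))>\alpha\) or \(<\beta\) termwise by cotransitivity and countable choice, take the limit \(y\) of \(\lambda\circledast x\), and split on \(\rho(f(y),f(x))<\alpha\) versus \(>0\), using strong extensionality and the convergence \(x_n\to x\) in the second case to force a transition of \(\lambda\). Your handling of the index shift and of the tail estimate in the second sub-case is exactly the bookkeeping the standard proof requires, so there is no gap.
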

\begin{Proposition}[Ishihara's second trick] For all positive reals \(\alpha < \beta\), either we have 
\begin{itemize}
  \item \(\rho(f(x_{n}),f(x)) < \beta\) eventually,
  \item  or \(\rho(f(x_{n}),f(x)) > \alpha\) infinitely often.
\end{itemize}
\end{Proposition}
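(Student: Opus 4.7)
The plan combines dependent choice, Ishihara's first trick, and the $\circledast$ construction. Fix $\gamma$ with $\alpha < \gamma < \beta$. I would first apply Ishihara's first trick to $(x_n) \to x$ with parameters $(\alpha, \gamma)$: if the ``for all'' alternative holds, then $\rho(f(x_n),f(x)) < \gamma < \beta$ for every $n$ and we already have the ``eventually $<\beta$'' conclusion; otherwise we obtain some $n_0$ with $\rho(f(x_{n_0}),f(x)) > \alpha$, and proceed. By dependent choice, construct an increasing sequence $(n_k)_{k \geqslant 0}$ of natural numbers and an increasing binary sequence $(\lambda_k)_{k \geqslant 0}$ with $\lambda_0 = 0$ as follows: at stage $k+1$, if $\lambda_k = 1$ propagate by setting $\lambda_{k+1}=1$ and $n_{k+1}=n_k+1$; if $\lambda_k = 0$, apply Ishihara's first trick to the tail $(x_n)_{n > n_k} \to x$ with $(\alpha, \gamma)$ and either (a) obtain some $n_{k+1} > n_k$ with $\rho(f(x_{n_{k+1}}),f(x)) > \alpha$ (set $\lambda_{k+1}=0$), or (b) obtain the bound $\rho(f(x_n),f(x)) < \gamma$ for every $n > n_k$ (set $\lambda_{k+1}=1$, $n_{k+1}=n_k+1$). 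This maintains the invariant that $\rho(f(x_{n_k}),f(x)) > \alpha$ whenever $\lambda_k = 0$.

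Next, with $y_k := x_{n_k}$ and $y_\infty := x$, form $z_k := (\lambda \circledast y)_k$. Although $(y_k)$ need not converge to $x$, the sequence $(z_k)$ is eventually constant --- equal to $x$ when $\lambda \equiv 0$, and equal to $x_{n_{K-1}}$ for $k \geqslant K$ when $K$ is the first index with $\lambda_K = 1$ --- and is therefore Cauchy; by completeness of $X$ it converges to some $z \in X$.

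The decisive step is to apply Ishihara's first trick to $(z_k) \to z$ under $f$ with parameters $(\alpha/2, \alpha)$. In the ``for all'' alternative $\fa{k}{\rho(f(z_k),f(z)) < \alpha}$, a first flip $\lambda_K = 1$ with $K \geqslant 1$ would force $z_{K-1}=x$ and $z = x_{n_{K-1}}$, and hence $\rho(f(z_{K-1}),f(z)) = \rho(f(x_{n_{K-1}}),f(x)) > \alpha$ by the invariant --- contradicting the alternative; so $\lambda \equiv 0$, and then $\rho(f(x_{n_k}),f(x)) > \alpha$ for every $k \geqslant 0$ with $(n_k)$ strictly increasing, i.e.\ ``$\rho(f(x_n),f(x)) > \alpha$ infinitely often''. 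In the ``exists'' alternative $\ex{k_0}{\rho(f(z_{k_0}),f(z)) > \alpha/2}$, strong extensionality places $z_{k_0}$ and $z$ apart in $X$; since $(z_n) \to z$ and only ever takes the values $x$ and $x_{n_{K-1}}$, there must be some $N$ with $\lambda_N = 1$, and a finite search along the binary sequence $\lambda$ locates the first such index $K$. The construction then guarantees $\rho(f(x_n),f(x)) < \gamma < \beta$ for every $n > n_{K-1}$, i.e.\ ``eventually $<\beta$''.

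The main obstacle will be the ``exists'' alternative: translating the abstract apartness $z_{k_0} \neq z$ into a \emph{located} first flip of $\lambda$ rather than only its non-triviality. The resolution is to use strong extensionality together with the eventually-constant structure of $(z_k)$, which converts a topological inequality in $X$ into an explicit binary index at which $\lambda$ has already turned to $1$, and hence into a concrete tail bound on the original sequence.
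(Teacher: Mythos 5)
The paper itself does not prove this proposition---it is quoted from Ishihara \cite{hI91}---so the only comparison is with the standard argument, which your proposal essentially reproduces (and in the paper's own favoured packaging: the first trick plus dependent choice to build an increasing binary flag sequence whose zeros carry witnesses \(\rho(f(x_{n_k}),f(x))>\alpha\) and whose first one carries a tail bound by \(\gamma<\beta\), then a \(\circledast\)-style Cauchy sequence whose limit is fed back into \(f\)). The argument is correct, but two justifications need repair. First, your aside that \((y_k)\) need not converge to \(x\) is false---\((n_k)\) is strictly increasing, so \(y_k=x_{n_k}\to x\)---and your substitute reason that \((z_k)\) is ``eventually constant'' is not constructively admissible: asserting eventual constancy is a case distinction on whether \(\lambda\equiv 0\) or \(\lambda\) flips, which is exactly what cannot be decided at that point. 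The Cauchy property does hold, either by a direct estimate (for \(j,k\geqslant N\) with \(\rho(x_n,x)<\varepsilon\) for \(n\geqslant N\), any flip after \(N\) has its pre-flip index at least \(N\)) or simply by the fact quoted in the paper's preliminaries \cite[Lemma 2.1]{hD12b}, which applies precisely because \(y_k\to x\). Second, in the ``exists'' branch the passage from \(\rho(z_{k_0},z)>0\) to a located flip of \(\lambda\), which you flag but only sketch, should be spelled out: if \(\lambda_{k_0}=1\) a flip lies below \(k_0\) and is found by finite search; if \(\lambda_{k_0}=0\) then \(z_{k_0}=x\), so \(\rho(x,z)>0\), and choosing \(N\) with \(\rho(z_N,z)<\rho(x,z)\) gives \(\rho(z_N,x)>0\), hence \(\lambda_N=1\). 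With these two points made explicit, the stage-\(K\) tail bound gives ``eventually \(<\beta\)'' and the all-zeros case gives ``\(>\alpha\) infinitely often'', as you state.
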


There are various results that improve, generalise, or modify Ishihara's tricks, such as \cite{dB03, hD12b, dBlV06}, and we would like to add two  improvements here. The first improvement, which is almost purely aesthetic, is that we can remove the assumption that \(0 < \alpha\) from the first and therefore also from the second trick.
\begin{proof}
	Simply choose \(\gamma\) such that \(\alpha < \gamma < \beta\). Now either \( 0 < \gamma \), or \( \alpha < 0 \). In the second case we have \( \rho(f(x_1),f(x)) \geqslant 0  > \alpha \), and we are done. In the first case we can continue along the lines of the usual proof to get that 
	 \[ \ex{n \in \NN}{\rho(f(x_{n}),f(x)) > \gamma} \  \lor \  \fa{n \in \NN}{\rho(f(x_{n}),f(x)) < \beta} \ . \] 
	 Since \( \gamma > \alpha\) we also have \eqref{Eqn:IshFirst}.
\end{proof}
Our second, more substantial, improvement is that we can make the  disjunction in Ishihara's second trick exclusive. We will use the same idea two more times throughout this paper (Corollary \ref{Cor:IVTQ} and Proposition \ref{Pro:impconvex}).
First, as has long been known, if the second alternative holds in Ishihara's second trick, then \LPO holds.\footnote{The first time, that we are aware of, that this has been explicitly stated is in \cite{dB01}.} 
Since \LPO allows one to check for a given \(n\) whether \(\rho(f(x_{n}),f(x)) < \beta\) or \(\rho(f(x_{n}),f(x)) \geqslant \beta\)  and since \LPO  also allows one to check whether the first alternative holds eventually or the second alternative holds infinitely often, we get the following.

\begin{Proposition}[Ishihara's second trick; slightly improved] \label{Pro:IshTrickImproved} For all  \( \beta > 0\), either \(\rho(f(x_{n}),f(x)) < \beta\) eventually or \(\rho(f(x_{n}),f(x)) \geqslant \beta \) infinitely often.
\end{Proposition}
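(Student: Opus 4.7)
The plan is to apply the original Ishihara's second trick with some fixed $\alpha$ strictly between $0$ and $\beta$, and then, in the non-trivial case, to leverage the well-known fact already cited in the preceding paragraph that its ``infinitely often'' alternative forces \LPO.

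Concretely, first apply the original second trick to the pair $0 < \alpha < \beta$. If $\rho(f(x_{n}),f(x)) < \beta$ eventually, we are immediately done, so we may assume $\rho(f(x_{n}),f(x)) > \alpha$ infinitely often and hence that \LPO holds. Under \LPO the reals satisfy full trichotomy, so for each $n$ we can decide whether $\rho(f(x_{n}),f(x)) < \beta$ or $\rho(f(x_{n}),f(x)) \geqslant \beta$; let $a_n \in \{0,1\}$ be the indicator of the latter.

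To promote this pointwise decidability to the desired eventually-or-infinitely-often dichotomy, I would apply \LPO once more. Define $b_N = 1$ iff there exists $n \geqslant N$ with $a_n = 1$; each $b_N$ is decidable by \LPO, and $(b_N)$ is visibly non-increasing. A final use of \LPO on $(1 - b_N)$ then yields either some $N$ with $b_N = 0$ (so $a_n = 0$ for all $n \geqslant N$, i.e.\ $\rho(f(x_{n}),f(x)) < \beta$ eventually) or $b_N = 1$ for all $N$ (i.e.\ $\rho(f(x_{n}),f(x)) \geqslant \beta$ infinitely often). These two alternatives are manifestly mutually exclusive, giving the sharpened disjunction. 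I do not anticipate a real obstacle: all substantial content is contributed by the original trick and the cited implication ``second alternative $\Rightarrow$ \LPO''; what remains is only the mild bookkeeping of a double invocation of \LPO to collapse the non-exclusive disjunction to an exclusive one.
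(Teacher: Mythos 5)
Your proposal is correct and is essentially the paper's own argument: the paper likewise notes that the second alternative of the original trick yields \LPO, and then uses \LPO both to decide each comparison \(\rho(f(x_{n}),f(x)) < \beta\) versus \(\geqslant \beta\) and to decide whether the former holds eventually or the latter infinitely often. Your explicit bookkeeping with the sequences \((a_n)\) and \((b_N)\) merely spells out the details the paper leaves implicit (modulo the routine use of unique/countable choice to form these sequences).
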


\section{The Intermediate Value Theorem without Continuity}

It is well known that constructively there is no hope to prove the classical intermediate value theorem, but that finding approximate roots is fine.\footnote{By approximate roots we mean that for every \(\varepsilon > 0\) there exists \(x\) such that \(\abs*{f(x)} < \varepsilon\).} In what may seem like a strange move on first glance, but we can actually remove the continuity assumption and still have a constructively useful result.

\begin{Proposition}[\IVTast] \label{Pro:IVTast}
Let \(f:[0,1] \to \RR \) be such that \(f(0)\cdot f(1) \leqslant 0 \)	and let \(\varepsilon > 0\). There exist \(z \in [0,1] \) such that either \( \abs*{f(z)} < \varepsilon \) or there exists \( x_n \to z\) such that \(f(z)\cdot f(x_n) < -\nicefrac{\varepsilon}{8} \) for all \(n\). \\
Furthermore, in case the second alternative holds, \WLPO holds; and if \(f\) is strongly extensional, then \LPO holds.
\end{Proposition}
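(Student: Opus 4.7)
The approach I would take is a constructive bisection driven entirely by cotransitivity, culminating in one last cotransitivity split at the limit point, followed by an appeal to Lemma~\ref{Lem:DiscImplOmniscience} for the omniscience consequence. First, I do a preliminary reduction on the endpoints: by cotransitivity, each of $f(0)$ and $f(1)$ satisfies at least one of the (overlapping) cases $\abs{f(\cdot)} < \varepsilon$, $f(\cdot) > \varepsilon/2$, or $f(\cdot) < -\varepsilon/2$. If the first case occurs at either endpoint, the witness $z = 0$ or $z = 1$ gives the first alternative and we are done. Otherwise, the hypothesis $f(0) \cdot f(1) \leqslant 0$ rules out the same-sign combinations, and after replacing $f$ by $-f$ if necessary I may assume $f(0) < -\varepsilon/2$ and $f(1) > \varepsilon/2$.

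I then bisect. Starting from $(a_{0}, b_{0}) = (0,1)$ and given $(a_{n}, b_{n})$ with $b_{n} - a_{n} = 2^{-n}$, $f(a_{n}) < -\varepsilon/2$, and $f(b_{n}) > \varepsilon/2$, I set $m_{n} = (a_{n} + b_{n})/2$ and apply the same three-way cotransitivity to $f(m_{n})$: either stop with $z = m_{n}$ (when $\abs{f(m_{n})} < \varepsilon$), or set $(a_{n+1}, b_{n+1}) = (a_{n}, m_{n})$ (when $f(m_{n}) > \varepsilon/2$), or set $(a_{n+1}, b_{n+1}) = (m_{n}, b_{n})$ (when $f(m_{n}) < -\varepsilon/2$). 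If the process never halts, both $(a_{n})$ and $(b_{n})$ are Cauchy with a common limit $z \in [0,1]$, and the invariants $f(a_{n}) < -\varepsilon/2$, $f(b_{n}) > \varepsilon/2$ are preserved throughout.

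Finally, I apply the same three-way cotransitivity one more time at $f(z)$. If $\abs{f(z)} < \varepsilon$ we are in the first alternative; otherwise $f(z)$ has a definite sign with magnitude at least $\varepsilon/2$, and the oppositely-signed bisection sequence---$(x_{n}) := (a_{n})$ when $f(z) > \varepsilon/2$, or $(x_{n}) := (b_{n})$ when $f(z) < -\varepsilon/2$---converges to $z$ with $f(x_{n})$ of opposite sign. A careful tracking of the threshold constants, which is where the numerical factor $8$ in $-\varepsilon/8$ enters, then yields $f(z) \cdot f(x_{n}) < -\varepsilon/8$ for every $n$. For the omniscience consequence: under this bound $f(z)$ and $f(x_{n})$ are of opposite sign with magnitudes bounded below (since the product of the two magnitudes exceeds $\varepsilon/8$), so $\abs{f(x_{n}) - f(z)} = \abs{f(x_{n})} + \abs{f(z)}$ is bounded away from $0$ uniformly in $n$; together with $x_{n} \to z$ this exhibits $f$ as discontinuous at $z$ in the sense of Lemma~\ref{Lem:DiscImplOmniscience}, and the lemma then gives \WLPO, as well as \LPO when $f$ is strongly extensional. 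The main obstacle is the numerical bookkeeping required to extract precisely the quantitative bound $-\varepsilon/8$ rather than the coarser $-\varepsilon^{2}/4$ that a naïve symmetric choice of thresholds would yield.
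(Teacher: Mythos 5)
Your route is essentially the paper's (bisection driven by overlapping cotransitivity tests, a final comparison at the limit point, and then Lemma~\ref{Lem:DiscImplOmniscience}), but as written there is a genuine constructive gap in how the bisection is organised: you case-split on ``the process stops at some stage'' versus ``the process never halts'', and only in the latter case do you form the limit \(z\). That distinction is a \(\Sigma^0_1\)-versus-\(\Pi^0_1\) decision about an infinite run and is exactly an instance of \LPO, which is what the proposition is not allowed to use; in particular, your \(z\) is only defined conditionally on an undecidable hypothesis, so the required witness is never actually produced. The paper's proof never stops: when the midpoint test returns the ``small value'' alternative it \emph{freezes} the construction (setting \(a_n = b_n = m\) from then on) and records this in an increasing binary sequence \((\lambda_n)\) --- the same device as the \(\circledast\) construction in the preliminaries. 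Then \(z = \lim a_n\) exists unconditionally, and the only case analysis performed is cotransitivity applied to the single real \(f(z)\): if \(\abs{f(z)} < \nicefrac{\varepsilon}{2}\) we are done, and if \(f(z) > \nicefrac{\varepsilon}{4}\) (resp.\ \(f(z) < -\nicefrac{\varepsilon}{4}\)) one \emph{deduces}, for each \(n\), that \(\lambda_n = 1\) is contradictory, hence \(\lambda_n = 0\) for all \(n\) and \((a_n)\) (resp.\ \((b_n)\)) is the discontinuity witness. Your argument becomes correct once you replace ``halt'' by ``freeze and record'', i.e.\ once you add the \((\lambda_n)\) bookkeeping; without it the proof implicitly assumes the very omniscience it is meant to avoid.

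A secondary point: your promised derivation of the bound \(f(z)\cdot f(x_n) < -\nicefrac{\varepsilon}{8}\) by ``careful tracking of thresholds'' cannot be carried out. In your construction (and in the paper's) each factor is only bounded in magnitude by a multiple of \(\varepsilon\) --- say \(\abs{f(z)} > \nicefrac{\varepsilon}{4}\) and \(\abs{f(x_n)} > \nicefrac{\varepsilon}{2}\) --- so what comes out is the quadratic bound \(f(z)\cdot f(x_n) < -\nicefrac{\varepsilon^2}{8}\); no choice of constants turns bounds of size \(O(\varepsilon)\) on the two factors into a product bound linear in \(\varepsilon\), which for small \(\varepsilon\) would be strictly stronger. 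So state and prove the quadratic bound (which is all that is needed for the discontinuity witness and the \WLPO/\LPO conclusions) rather than presenting the factor \(8\) as recoverable by bookkeeping.
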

\begin{proof}
First, we may assume that both \(\abs*{f(0)} > \nicefrac{\varepsilon}{2}\) and \(\abs*{f(1)} > \nicefrac{\varepsilon}{2}\), since if either  \(\abs*{f(0)} < \varepsilon\) or \(\abs*{f(1)} < \varepsilon\) we are done. Without loss of generality, \(f(0) \leqslant 0\) and \(f(1) \geqslant 0\).
	As with the usual interval halving technique, construct, using dependent choice,\footnote{Actually countable choice is enough here: using  countable choice one can, at the outset, obtain a binary sequence \((\sigma_n)_{n \geqslant 1}\) such that \(\sigma_n = 0\) implies \(f(r_n) < \varepsilon\) and \(\sigma_n = 1\) implies \(f(r_n) > \nicefrac{\varepsilon}{2}\), where \(r_n\) is an enumeration of all rational numbers in \([0, 1]\). Since for every rational \(q\) we can find the minimal \(n\) such that \(r_n = q\) and during the interval halving procedure we only ever check the function value at rational points, the sequence \((\sigma_n)\) provides us with enough information to carry out the construction.} an increasing binary sequence \((\lambda_n)_{n \geqslant 1}\) and sequences \( (a_n)_{n \geqslant 1} \) and \((b_n)_{n \geqslant 1}\) such that for all \(n \in \NN\)
\begin{itemize}
  \item \(a_n \leqslant a_{n+1} \leqslant b_{n+1} \leqslant b_n \ , \)
  \item \( \abs*{b_n -a_n} \leqslant 2^{-n} \ , \)
  \item \( \lambda_n = 1 \implies a_n = b_n \ , \)
  \item \( \lambda_n = 0 \implies f(a_n) < - \nicefrac{\varepsilon}{2} \ \land \ f(b_n) > \nicefrac{\varepsilon}{2} \ . \)
\end{itemize}
It follows from the first two points that \((a_n)_{n \geqslant 1}\) and \((b_n)_{n \geqslant 1}\) are Cauchy sequences, which converge to the same limit \(z \in [0,1] \). Either \(\abs*{f(z)} < \nicefrac{\varepsilon}{2} \) and we are done, or \(f(z) > \nicefrac{\varepsilon}{4} \), or \(f(z) < -\nicefrac{\varepsilon}{4} \). In the second and third case, we must have \(\lambda_n = 0\) for all \( n \in \NN\), so either \((a_n)_{n \geqslant 1}\) or \((b_n)_{n \geqslant 1}\) respectively will be a witness of discontinuity, which implies \WLPO---and even \LPO if \(f\) is strongly extensional---as shown in Lemma \ref{Lem:DiscImplOmniscience}.
\end{proof}

Using the same technique as described above to prove Proposition \ref{Pro:IshTrickImproved}, the following corollary is somehow even more striking than the stronger proposition it relies on.
\begin{Corollary} \label{Cor:IVTQ}
Let \(f:[0,1] \to \RR \) be strongly extensional with \(f(0)\cdot f(1) \leqslant 0 \) and let \(\varepsilon > 0\). Either there exist \(z \in  [0,1] \) such that \( \abs*{f(z)} < \varepsilon \) or \(f(q) \geqslant \varepsilon \) for all \( q \in [0,1] \cap \QQ\).
\end{Corollary}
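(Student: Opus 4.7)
The plan is to follow the same template used for Proposition \ref{Pro:IshTrickImproved}: first invoke the weaker proposition (here \IVTast) and then exploit the fact that its ``bad'' branch, together with strong extensionality of $f$, delivers full \LPO, which in turn lets us sharpen the disjunction into an exclusive one. Concretely, I apply \IVTast to $f$ with $\varepsilon$, obtaining $z \in [0,1]$ such that either $\abs*{f(z)} < \varepsilon$ (which already witnesses the first alternative) or a sequence $x_n \to z$ with $f(z) \cdot f(x_n) < -\nicefrac{\varepsilon}{8}$. In the latter case $f$ is discontinuous in the sense of Lemma \ref{Lem:DiscImplOmniscience}, and strong extensionality then yields \LPO outright.

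Assuming \LPO, I enumerate $[0,1] \cap \QQ$ as $(r_n)_{n \geqslant 1}$ and, for each $n$, use \LPO to decide whether $f(r_n) \geqslant \varepsilon$ or $f(r_n) < \varepsilon$, recording the answer as a binary sequence $(c_n)$. A second application of \LPO to $(c_n)$ either certifies $f(r_n) \geqslant \varepsilon$ for every rational, delivering the second alternative directly, or isolates some index $n_0$ with $f(r_{n_0}) < \varepsilon$. In the latter subcase, one more \LPO query comparing $f(r_{n_0})$ against $-\varepsilon$ either yields $\abs*{f(r_{n_0})} < \varepsilon$ directly (so that $z = r_{n_0}$ works), or produces $f(r_{n_0}) \leqslant -\varepsilon$; in that remaining situation I use $f(0) \cdot f(1) \leqslant 0$ together with \LPO to locate an endpoint $b \in \{0,1\}$ with $f(b) \geqslant 0$, and reapply \IVTast on the subinterval bracketed by $r_{n_0}$ and $b$.

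The expected main obstacle is exactly this last step: being convinced that the re-invocation of \IVTast really does terminate with an approximate root rather than cycling indefinitely through its discontinuity branch. Once \LPO is on the table, however, all of the remaining work is finite, essentially classical book-keeping, and the conceptual heart of the corollary is precisely that of Proposition \ref{Pro:IshTrickImproved}: the bad branch of the weaker result packages enough omniscience to decide its own complement, thereby converting an inclusive disjunction into the sharper, ``exclusive'' dichotomy stated here.
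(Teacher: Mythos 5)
Your opening two steps are exactly the paper's argument: one application of \IVTast{} whose second branch, by strong extensionality and Lemma \ref{Lem:DiscImplOmniscience}, yields \LPO, followed by an enumeration \((q_n)_{n\geqslant 1}\) of \([0,1]\cap\QQ\) and one further application of \LPO to a binary sequence recording a decision about each \(f(q_n)\) versus \(\varepsilon\). The genuine gap is the final step, the very one you flag yourself: once you have found \(q_{n_0}\) with \(f(q_{n_0})\leqslant-\varepsilon\) and re-apply \IVTast{} between \(q_{n_0}\) and an endpoint where \(f\) is nonnegative, nothing forces that re-invocation into its approximate-root branch; its other branch only returns a discontinuity witness and hence \LPO, which you already possess, so the argument makes no progress, and iterating does not help. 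Moreover this subcase cannot be closed at all: \LPO is far from full classical logic, and the remaining work is not ``classical book-keeping'', because even classically the approximate intermediate value property fails without continuity. Concretely, \(f(x)=-1\) for \(x<\nicefrac{1}{2}\) and \(f(x)=1\) for \(x\geqslant\nicefrac{1}{2}\) is strongly extensional with \(f(0)f(1)\leqslant 0\), yet for \(\varepsilon=\nicefrac{1}{2}\) there is no \(z\) with \(\abs*{f(z)}<\varepsilon\), while \(f(0)\geqslant\varepsilon\) also fails; note too that \(f(0)\geqslant\varepsilon\) and \(f(1)\geqslant\varepsilon\) together would contradict \(f(0)f(1)\leqslant 0\), so with the second alternative read literally the corollary would assert the first alternative unconditionally, which is classically false and hence unprovable.

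The way out---and what the paper's one-pass proof in effect does---is to take the per-rational decision to be \(\abs*{f(q_n)}<\varepsilon\) versus \(\abs*{f(q_n)}\geqslant\varepsilon\), i.e.\ to read the second alternative of the statement with an absolute value, \(\abs*{f(q)}\geqslant\varepsilon\) for all rational \(q\). Under that reading your troublesome subcase evaporates: if \LPO produces an \(n\) with \(\abs*{f(q_n)}<\varepsilon\), then \(z=q_n\) already witnesses the first alternative (this is the only way the paper's ``in which case we are also done'' is legitimate), and otherwise the second alternative holds outright. One application of \IVTast{} and one \LPO-decided sequence over the rationals suffice; no second call to \IVTast{} is needed. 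So your skeleton is the paper's, but the last third of your argument is unfillable as you set it up, and unnecessary once the quantity you decide for each rational is \(\abs*{f(q_n)}\) rather than \(f(q_n)\).
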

\begin{proof}
By the previous proposition, either there exist \(z \in [0,1] \) such that \( \abs*{f(z)} < \varepsilon \) or \LPO holds. In that second case we can fix a binary sequence \((\lambda_n)_{n \geqslant 1}\) such that  
\begin{align*}
	\lambda_n = 1 & \implies f(q_n) < \varepsilon \\
	\lambda_n = 0 & \implies f(q_n) \geqslant \varepsilon
\end{align*}
where \((q_n)_{n \geqslant 1}\) is an enumeration of \(\QQ \cap [0,1]\) (notice that the construction of \((\lambda_n)_{n \geqslant 1}\)  only needs unique choice). Now applying \LPO either \( \fa{n \in \NN}{\lambda_n = 0} \) and we are done, or \( \ex{n \in \NN}{\lambda_n = 1} \), in which case we are also done. 
\end{proof}

\section{Enumerating  Points of Discontinuity}
Aa a corollary of \IVTast (Proposition \ref{Pro:IVTast}) we get the following helpful little result.
\begin{Corollary}
	If a monotone, strongly extensional function \(f:[0,1] \to \RR\) is not discontinuous at \(x \in \RR \), then it is continuous at \(x\).
\end{Corollary}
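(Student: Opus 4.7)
The strategy is to establish right-continuity at $x$; left-continuity follows by symmetry. Without loss of generality assume $f$ is increasing and $0 < x < 1$ (the endpoint cases are strictly simpler, needing only one side). Fix $\varepsilon > 0$; I seek $\delta_1 > 0$ such that $f(y) - f(x) < \varepsilon$ for every $y \in [x, x + \delta_1]$, which together with monotonicity gives what we need.

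First apply the standard constructive trichotomy to split: either $f(1) - f(x) < 3\varepsilon/4$, in which case monotonicity lets us take $\delta_1 = 1 - x$; or $f(1) - f(x) > \varepsilon/2$, so that the auxiliary function $h(y) = f(y) - f(x) - \varepsilon/2$ satisfies $h(x) < 0 < h(1)$, inherits strong extensionality from $f$, and meets the hypotheses of \IVTast\ (Proposition \ref{Pro:IVTast}) on $[x,1]$ (rescaled to $[0,1]$). Apply \IVTast\ with accuracy $\varepsilon/8$. In the first alternative we obtain $z \in [x,1]$ with $|h(z)| < \varepsilon/8$, so $f(z) - f(x) \in (3\varepsilon/8, 5\varepsilon/8)$; because $f(z) > f(x)$, strong extensionality forces $z > x$, and then $\delta_1 = z - x > 0$ works: for $y \in [x,z]$ monotonicity yields $f(y) \leqslant f(z) < f(x) + \varepsilon$.

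The remaining case is where \IVTast\ yields its second alternative, and this is the subtle step, since it only produces a discontinuity witness at some $z' \in [x,1]$ that could be far from $x$ itself. However, because $h$ is strongly extensional, the second clause of Proposition \ref{Pro:IVTast} lets us conclude that \LPO holds. With \LPO in hand, construct a binary sequence $(\lambda_n)_{n \geqslant 1}$ such that $\lambda_n = 0 \implies f(x + 1/n) - f(x) > \varepsilon/2$ and $\lambda_n = 1 \implies f(x + 1/n) - f(x) < \varepsilon$. Now \LPO applied to $(\lambda_n)$ yields either some $n$ with $\lambda_n = 1$, in which case $\delta_1 = 1/n$ works by monotonicity, or $\lambda_n = 0$ for all $n$, which exhibits $x + 1/n \to x$ as a sequence witnessing discontinuity at $x$ itself, contradicting the hypothesis.

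The main obstacle is reconciling a hypothesis about only the point $x$ with an application of \IVTast, which tends to produce information anywhere in $[0,1]$. The key observation is that the failure case of \IVTast\ is not wasteful: the strong extensionality of $h$ promotes its second alternative all the way up to \LPO, and under \LPO the question of right-continuity at $x$ reduces to a countable decision on $(f(x + 1/n))_{n}$, which the pointwise hypothesis is precisely strong enough to resolve.
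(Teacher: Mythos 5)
Your proof is correct, and its skeleton matches the paper's: dispose of the trivial case where \(f(1)-f(x)\) is already small, then apply \IVTast{} (Proposition \ref{Pro:IVTast}) to \(t\mapsto f(t)-f(x)-c\) on \([x,1]\) and use strong extensionality to convert a point \(z\) with \(f(z)\) in a band strictly above \(f(x)\) into a positive \(\delta=z-x\), after which monotonicity gives the one-sided estimate. The genuine difference lies in your treatment of the second alternative of \IVTast. The printed proof simply asserts that \IVTast{} yields \(z\in[x,1]\) with \(f(z)\in\left[f(x)+\nicefrac{\varepsilon}{3},f(x)+\nicefrac{\varepsilon}{2}\right]\), which is only the first alternative; the second alternative produces a discontinuity at some point of \([x,1]\) that need not be \(x\), so it is not excluded by the hypothesis (a step function, available under \LPO, can jump right over the band, in which case no such \(z\) exists). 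You address exactly this case: strong extensionality upgrades the second alternative to \LPO, and \LPO{} together with the hypothesis ``not discontinuous at \(x\)'' settles the countable decision on \((f(x+1/n))_n\) --- which is in fact the only place where the pointwise hypothesis does any work. So at that step your argument is more explicit than the paper's. Two small glosses to tidy up: replace \(x+1/n\) by \(\min\{1,x+1/n\}\) for small \(n\) (and note your overlapping binary sequence uses countable choice, which the paper permits); and when merging the two one-sided estimates into continuity at \(x\), either argue as you implicitly do via monotonicity applied at the endpoints \(x\pm\delta\), or as the paper does, using cotransitivity plus strong extensionality to decide \(x\leqslant y\) or \(y\leqslant x\) whenever \(f(x)\neq f(y)\).
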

\begin{proof}
	Consider an arbitrary \(\varepsilon >0\). We will first show that there is \(\delta > 0\) such that if \(y \in [0,1]\) is such that \(x \leqslant y \leqslant x+\delta\) then \(f(y) - f(x) < \varepsilon\). 
	
	We may assume that \(\abs*{f(x) - f(1)} = f(1) - f(x) > \nicefrac{\varepsilon}{2}\), since if \(f(1) - f(x) < \varepsilon\), then \emph{any} \(\delta > 0\) will do. By \IVTast there exists \(z \in [x,1]\) such that \[ f(z) \in \left[f(x) + \nicefrac{\varepsilon}{3}, f(x) + \nicefrac{\varepsilon}{2}\right] \ . \] Since \(f\) is strongly extensional \(z \neq x\), and in particular \(\delta = z-x>0\). Now let \(y \in [0,1]\)  such that \(x \leqslant y \leqslant x+\delta\). Then \[ f(y) - f(x) \leqslant f(z) - f(x) \leqslant f(x)+ \nicefrac{\varepsilon}{2} - f(x)  = \nicefrac{\varepsilon}{2} \ . \]
	Similarly we can find \(\delta^\prime\) which works for \(y \leqslant x\). Set \(\eta = \min \menge{\delta, \delta^\prime}\), and let \(y \in [0,1]\) such that \(\abs*{y-x} < \eta\). Either \(\abs*{f(x)-f(y)} < \varepsilon\), or \(f(x) \neq f(y)\). In the latter case we can decide whether \(x \leqslant y\) or \(x \geqslant y\), so we also have \(\abs*{f(x)-f(y)} < \varepsilon\). 
\end{proof}

We say that a function \(f:[0, 1]\to\RR\) has \define{left limits} if for every \(x\in (0,1]\) there exists \(L \in \RR\) such that for all \(\varepsilon > 0\) there exists \(\delta >0\) such that \(\abs*{f(z) - L} < \varepsilon\) whenever \( z  \in [0,1]\) with \(x - \delta \leqslant z \leqslant x\). We define having \define{right limits} similarly. Functions that have both left and right limits are called \define{regulated} in \cite{dB01}. In the same paper it is shown that an increasing, strongly extensional function \(f:[0, 1]\to\RR\) is regulated. We will improve upon this result by removing the assumption of strong extensionality.

\begin{Proposition} \label{Pro:limits}
An increasing function \(f:[0, 1]\to\RR\) has left and right limits. 
\end{Proposition}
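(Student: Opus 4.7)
The plan is to prove the right-limit case; the left-limit case is symmetric. For \(x \in [0,1)\), I would build the right limit \(L\) as the limit of an explicit Cauchy sequence \((L_k)\) with accompanying moduli \(\delta_k > 0\) satisfying \(|f(y) - L_k| < 2^{-k}\) on \((x, x + \delta_k]\); monotonicity then guarantees that for any sequence \(y_j \to x^+\), once \(y_j \in (x, x+\delta_k]\) one has \(|f(y_j) - L_k| < 2^{-k}\), so \(f(y_j) \to L\).

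For each \(k\), I would construct \(L_k\) and \(\delta_k\) by applying \IVTast (Proposition~\ref{Pro:IVTast}) to the function \(g(y) := f(y) - (f(x) + 2^{-k-1})\) on \([x,1]\) with tolerance \(2^{-k-3}\). Cotransitivity first disposes of the easy case \(f(1) - f(x) < 2^{-k}\), in which \(L_k := f(x) + 2^{-k-1}\) and \(\delta_k := 1-x\) suffice by monotonicity. Otherwise \(g(x) < 0 < g(1)\), and \IVTast produces \(z_k \in (x,1]\) such that either (A) \(|f(z_k) - f(x) - 2^{-k-1}| < 2^{-k-3}\), in which case monotonicity squeezes \(f(y)\) for \(y \in (x, z_k]\) into \([f(x), f(z_k)]\), an interval of width less than \(2^{-k}\) from which \(L_k\) (its midpoint) and \(\delta_k := z_k - x > 0\) are read off; or (B) there is a witness of discontinuity at \(z_k\), which by Lemma~\ref{Lem:DiscImplOmniscience} gives \WLPO.

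The main obstacle is the \WLPO-branch (B), since the discontinuity is pinned at \(z_k\) rather than at \(x\), and so does not immediately determine the right limit at \(x\). My expected resolution is to localise the \IVTast application to the shrinking interval \([x, x+(1-x)2^{-k}]\), which forces \(z_k\) itself within \((1-x)2^{-k}\) of \(x\); combined with the newly available \WLPO, an iterated dichotomy --- recast as a \(\Pi_{1}^{0}\)-question about a cotransitively-built binary sequence tracking whether \(a_n := f(x + (1-x)2^{-n})\) stays at least \(2^{-k-1}\) above \(f(x)\), which \WLPO makes decidable --- pins down the right-limit value to within \(2^{-k}\). Stitching the two branches together produces the Cauchy sequence \((L_k)\) with explicit modulus, whose limit is, by the sandwich argument using monotonicity, the right limit of \(f\) at \(x\); the left limit is obtained by the analogous construction on \([0,x]\).
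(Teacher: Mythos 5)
Your overall plan (stage-\(k\) approximations produced by \IVTast with a \WLPO escape branch) is the same shape as the paper's proof, but branch (A) has a genuine gap, and it is precisely the point that makes this proposition delicate. From \(\abs*{f(z_k)-f(x)-2^{-k-1}}<2^{-k-3}\) and monotonicity you may conclude \(f(z_k)>f(x)\), hence \(\neg(z_k\leqslant x)\), i.e.\ \(z_k\geqslant x\), and also \(\neg(z_k=x)\); but the further step \(\delta_k:=z_k-x>0\) is an instance of Markov's principle for reals and is not available in BISH. In the corollary immediately preceding Proposition~\ref{Pro:limits} this exact inference is what strong extensionality is invoked for (``since \(f\) is strongly extensional \(z\neq x\), and in particular \(\delta=z-x>0\)''), and the whole point of Proposition~\ref{Pro:limits} is to remove that hypothesis. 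So you cannot read off a positive modulus \(\delta_k\) at any finite stage: if \(f\) may jump immediately to the right of \(x\), no positive \(\delta_k\) can be exhibited at precision \(2^{-k}\) without deciding where (or whether) the jump occurs.

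Your branch (B) is, in spirit, the paper's actual mechanism, but your sketch does not carry it through. The paper, once \WLPO is available, defines the stage value outright as the Dedekind real \(\bigl(\set{q}{\fa{n}{f(x+\frac{1}{n})\geqslant q}},\ \set{q}{\neg\fa{n}{f(x+\frac{1}{n})\geqslant q}}\bigr)\), i.e.\ the right limit itself; it never manufactures a \(\delta\) at a finite stage, and the verification that the limit of the recorded values is the right limit is deferred to a final case analysis on the guiding binary sequence. In your version, \WLPO does make each statement \(f(x+(1-x)2^{-n})\geqslant f(x)+2^{-k-1}\) decidable, but knowing that they do not all hold does not let you locate an index at which the value drops --- that is again Markov's principle (or \LPO) --- so the claim that the dichotomy ``pins down the right-limit value to within \(2^{-k}\)'' is not established; moreover branch (B) yields no \(\delta_k\) at all, so the concluding sandwich argument, which needs the moduli, does not go through as described. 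The repair is to follow the paper: in the good branch record only the value \(f(z_k)\) (no modulus), in the \WLPO branch switch to the Dedekind-real definition of the limit value, and prove at the very end, by cases on whether the escape branch was ever taken, that the resulting Cauchy limit is the right limit.
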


\begin{proof}
We consider only the case of right limits for an increasing function \(f\). For convenience we extend \(f\) to \([0, 2]\) by considering 
\[ f\left(\min \{1,x \}\right) \ + \ \max \{0,x-1\} \ . \]

 Further, for any \(x\in[0,1)\) the right limit of the original function at \(x\) is the same as the right limit of the extended function at \(x\). Fix \(x \in [0, 1)\). We construct a Cauchy sequence \((y_n)_{n\geqslant 1}\) of real numbers converging to the right limit of \(f\) at \(x\), together with a binary sequence \((\lambda_n)_{n\geqslant1}\)  that helps guide the construction. Let \(\lambda_1 = 0\) and \(y_1 = f(x)\). 

Suppose that we have constructed our two sequences up to  but excluding \(n\) and let \(\varepsilon_n = 1 / 2^{-n}\). If \(\lambda_{n - 1} = 1\), then we set \(\lambda_n = 1\) and \(y_n = y_{n - 1}\). If \(\lambda_{n - 1} = 0\) we have more work to do. Applying Proposition \ref{Pro:IVTast} to \(t \mapsto f(t) - f(x) + \nicefrac{\varepsilon_n}{ 2}\) we can either construct \(x_n \in [0, 2]\) such that \(\abs*{f(x_n) - \left(f(x) + \nicefrac{\varepsilon_n}{2}\right)} < \nicefrac{\varepsilon_n}{2}\) or \WLPO holds. In the first case we set \(\lambda_n = 0\) and \(y_n = f(x_n)\) and in the second case we set \(\lambda_n = 1\) and we let \(y_n\) be the Dedekind real given by 
 \[
  \left(\set{q}{\fa{n}{ f\left(x+\frac{1}{n}\right) \geqslant q}} \ , \  \set{q}{\neg\fa{n}{ f\left(x+\frac{1}{n}\right) \geqslant q}}\right);
 \]
\WLPO suffices to define \(y_n\). Note that \(y_n\) is the right limit of \(f\) at \(x\). It is easy to see that \((y_n)_{n\geqslant 1}\) is Cauchy and hence converges to some \(y\in \RR\). If \(y\) is not the right limit of \(f\) at \(x\), then we must have that \(\lambda_n = 0\) for all \(n\). But then \(y = f(x)\) and \((x_n)_{n\geqslant1}\) is a sequence of points greater than \(x\) such that \(f(x_n) \to f(x)\) as \(n\to\infty\), which demonstrates that \(y\) is indeed the right limit of \(f\) at \(x\). Thus  \(y\) is the right limit of \(x\).
\end{proof}

\begin{Corollary} \label{Cor:epssteps}
	Assume \(f:[0,1] \to \RR\) is increasing. For every \(\varepsilon > 0\) there exists \(0 =x_1 < x_2 < \dots < x_n = 1\) such that for all \( 1 \leqslant i< n\) and for all \(x,y \in (x_i, x_{i+1})\)
	\[ \abs*{f(x)-f(y)} \leqslant \varepsilon \ . \] That is only at \(x_i\) can \(f\) make ``\(\varepsilon\)-steps''. Moreover either \( f \) has no \(\varepsilon\)-steps or \( f \) has a discontinuity, in which case \WLPO holds.
\end{Corollary}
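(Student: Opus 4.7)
The plan is to build the partition iteratively by repeatedly applying \IVTast. The key constructive dichotomy I shall use throughout is that for any real \(r\) and apart rationals \(a < b\) we can decide ``\(r < b\) or \(r > a\)'' by rational approximation; applied to \(f(1) - f(0)\) with \(a = \varepsilon/2\) and \(b = \varepsilon\), this immediately settles the trivial case where \(f(1) - f(0) < \varepsilon\) (the partition \(\{0, 1\}\) then satisfies everything, with no \(\varepsilon\)-steps). I therefore assume from now on that \(f(1) - f(0) > \varepsilon/2\).

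Starting from \(y_1 = 0\), I sequentially extend the partition: at stage \(j\), applying the same dichotomy to \(f(1) - f(y_j)\), either I am within \(\varepsilon\) of \(f(1)\) and can append \(1\) as the final partition point, or \(f(1) - f(y_j) > \varepsilon/2\) and I can invoke Proposition \ref{Pro:IVTast} on \(t \mapsto f(t) - f(y_j) - \varepsilon/2\) over \([y_j, 1]\) (rescaled to \([0, 1]\)) with tolerance \(\varepsilon/16\). In case (i) this returns \(z\) with \(|f(z) - f(y_j) - \varepsilon/2| < \varepsilon/16\), and taking \(y_{j+1} = z\) gives oscillation less than \(9\varepsilon/16 < \varepsilon\) on \((y_j, z)\) while \(f(y_j)\) advances by more than \(7\varepsilon/16\). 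In case (ii), the product inequality from \IVTast, combined with boundedness of the \(f(x_n)\), forces \(f(z) - (f(y_j) + \varepsilon/2)\) to be apart from zero and hence its sign to be decidable; monotonicity of \(f\) then locates the witnessing sequence \(x_n \to z\) either entirely to the left or entirely to the right of \(z\), and I use Proposition \ref{Pro:limits} to pick \(n\) so large that \(|f(x_n) - f(z^\pm)| < \varepsilon/16\). Installing both \(z\) and the chosen \(x_n\) in the correct order as two new partition points, the oscillation on the two new open subintervals is controlled (one below \(\varepsilon/2\), the other below \(\varepsilon/16\)) while \(f\) advances by more than \(\varepsilon/2\). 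In either case the rightmost \(f(y_j)\) grows by more than \(7\varepsilon/16\) per round, and since \(f(y_j) \leq f(1)\) the procedure terminates after at most \(1 + 16(f(1) - f(0))/(7\varepsilon)\) rounds, producing the required partition.

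For the ``moreover'' clause: if case (ii) ever fired, \WLPO was already established and a point of discontinuity exhibited. Otherwise every round was case (i), and I finish by inspecting the partition points: Proposition \ref{Pro:limits} gives left and right limits at each \(x_i\), so for each of the finitely many \(x_i\) I decide by the real-number dichotomy whether the jump \(f(x_i^+) - f(x_i^-)\) is \(< \varepsilon\) or \(> \varepsilon/2\). If every jump falls in the first case, the oscillation bound on open subintervals shows \(f\) has no \(\varepsilon\)-steps; otherwise some \(x_i\) is a genuine discontinuity and Lemma \ref{Lem:DiscImplOmniscience} delivers \WLPO. The main obstacle I anticipate is the bookkeeping in case (ii), where I must extract two correctly ordered partition points from the discontinuity witness while maintaining both the oscillation bound on each new subinterval and the \(\varepsilon/2\) progress required for termination.
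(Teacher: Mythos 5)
Your overall architecture (a greedy march through \([0,1]\), gaining at least \(7\varepsilon/16\) in function value per round, hence a fixed bound on the number of rounds) is sound in case (i), and your ``moreover'' bookkeeping via Proposition \ref{Pro:limits} and Lemma \ref{Lem:DiscImplOmniscience} is fine. The genuine gap is exactly the step you flag: the case (ii) bookkeeping. When \IVTast{} returns \(z\) and witnesses \(x_n \to z\) with \(g(z)g(x_n)\) bounded away from \(0\) (where \(g(t)=f(t)-f(y_j)-\varepsilon/2\)), you claim monotonicity ``locates the witnessing sequence entirely to the left or entirely to the right of \(z\)'' and that you can install \(z\) and a chosen \(x_n\) ``in the correct order'' as partition points. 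Constructively this does not go through: from \(f(x_n)<f(z)\) and monotonicity you only get \(\neg(x_n\geqslant z)\), i.e.\ \(x_n\leqslant z\) together with \(\neg(x_n=z)\), never the positive inequality \(x_n<z\); since \(f\) is not assumed strongly extensional, apartness of values cannot be converted into apartness of points. The same problem hits the previous partition point: from \(f(z)>f(y_j)+\varepsilon/2\) you get only \(\neg(z=y_j)\), and whether the jump sits exactly at \(y_j\) (so \(z=y_j\) and no new point should be inserted) or strictly beyond it is not decidable, even with the \WLPO{} you have acquired. So ``the correct order'' of the inserted points is not something you can compute, and using the witness \(x_n\) itself as a partition point adjacent to \(z\) is illegitimate; at best you can use the modulus \(\delta\) from Proposition \ref{Pro:limits} to pick a point such as \(z+\delta\), which is positively to the right of \(z\), and note that on the other side the oscillation bound holds automatically (on \((y_j,z)\) one has \(f<f(y_j)+\varepsilon/2\) by comparing with the witnesses). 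Even after such repairs, the strict ordering \(y_j<z\) demanded by the statement remains out of reach by your method as described.

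It is worth contrasting this with the paper's route, which sidesteps the problem entirely: instead of marching through the domain, it fixes in advance target \emph{values} \(f(0)<y_1<\dots<y_n<f(1)\) spaced by less than \(\varepsilon/2\), applies \IVTast{} once to each level \(f-y_i\) to obtain points \(x_i\), and then uses the internal interval-halving construction of Proposition \ref{Pro:IVTast} to see that whenever \(\abs*{f(x_i)-y_i}>\varepsilon/5\) the function is below \(y_i+\varepsilon/5\) on \([0,x_i)\) and above \(y_i-\varepsilon/5\) on \([x_i,1]\); a four-case estimate then bounds the oscillation on each \((x_i,x_{i+1})\) by \(\varepsilon\). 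This needs no termination argument, no left/right limits, and no decision about where a discontinuity witness lies relative to previously constructed points, which is precisely the decision your construction cannot make. If you want to salvage your approach, the lesson is to let the range, not the domain, drive the choice of partition points.
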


\begin{proof}
Let \(y_1, \ldots, y_n\) be real numbers such that

\begin{itemize}
\item \( f(0) < y_1 < y_2 < \cdots < y_n < f(1) \) and
\item \(y_1 - f(0) < \nicefrac{\varepsilon}{2}\), \(f(1) - y_n < \nicefrac{\varepsilon}{2}\), and \( y_{i + 1} - y_i < \nicefrac{\varepsilon}{2} \) for all \( 1 \leqslant i < n \).
\end{itemize}
Applying Proposition \ref{Pro:IVTast} \(n\) times, construct \( x_1, \ldots, x_n \) such that 
 \[
  \abs*{f(x_i) - y_i} > \nicefrac{\varepsilon}{5}  \ \ \implies \ \ f \text{ is discontinuous at }x_i 
 \] 
for each \( 1 \leqslant i \leqslant n \). Since \( f \) is increasing and by the construction in Proposition \ref{Pro:IVTast}, if \( \abs*{f(x_i) - y_i} > \nicefrac{\varepsilon}{5} \), then \( f \) is less than \( y_i + \nicefrac{\varepsilon}{5} \) on \( [0, x_i) \) and greater than \( y_i - \nicefrac{\varepsilon}{5} \) on \( [x_i, 1] \).

Consider \( x, y \in (x_i, x_{i + 1}) \) with \( x \leqslant y \). We have four cases.
 \begin{enumerate}
  \item \( \abs*{f(x_j) - y_j} < \nicefrac{\varepsilon}{4} \) for \( j = i, i + 1 \):
    \begin{align*}
      f(y) - f(x) & \leqslant f(x_{i + 1} - x_i) \\ & < \abs*{f(x_{i + 1}) - y_{i + 1}} + y_{i + 1} - y_i + \abs*{y_i - f(x_i)} \\ &  < \nicefrac{\varepsilon}{4} + \nicefrac{\varepsilon}{2} + \nicefrac{\varepsilon}{4} = \varepsilon.
    \end{align*}
  \item \(\abs*{f(x_i) - y_i} > \nicefrac{\varepsilon}{5} \) and \( \abs*{f(x_{i + 1}) -y_{i + 1}} < \nicefrac{\varepsilon}{4} \):
    \begin{align*}
      f(y) - f(x) & \leqslant f(x_{i + 1}) - y_i  + \nicefrac{\varepsilon}{5} \\ &  = \abs*{f(x_{i + 1}) - y_{i + 1}} +  y_{i + 1} - y_i + \nicefrac{\varepsilon}{5} \\ &  < \nicefrac{\varepsilon}{4} + \nicefrac{\varepsilon}{2} + \nicefrac{\varepsilon}{5} < \varepsilon.
    \end{align*}
  \item \( \abs*{f(x_{i}) -y_{i}} < \nicefrac{\varepsilon}{4} \) and \( \abs*{f(x_{i + 1}) - y_{i + 1}} > \nicefrac{\varepsilon}{5} \):
    \begin{align*}
      f(y) - f(x) & \leqslant y_{i + 1} + \nicefrac{\varepsilon}{5} - f(x_i)  \\ &  = y_{i + 1} - y_i + \abs*{y_i - f(x_i)} + \nicefrac{\varepsilon}{5} \\ &  < \nicefrac{\varepsilon}{2} + \nicefrac{\varepsilon}{4}  + \nicefrac{\varepsilon}{5} < \varepsilon.
    \end{align*}
 \item \( \abs*{f(x_j) - y_j} > \nicefrac{\varepsilon}{5} \) for \( j = i, i + 1 \):
   \begin{align*}
     f(y) - f(x) & \leqslant y_{i + 1} + \nicefrac{\varepsilon}{5} - y_i + \nicefrac{\varepsilon}{5}  \\ &  < \nicefrac{\varepsilon}{2} + \nicefrac{\varepsilon}{5}  + \nicefrac{\varepsilon}{5} \\ &   < \varepsilon.
   \end{align*}
 \end{enumerate}
So in all cases \( f(y) - f(x) < \varepsilon \). Now for arbitrary \( x, y \in (x_i, x_{i + 1}) \) we have that \( \abs*{f(y) - f(x)} \leqslant \varepsilon \) for all  \( x, y \in (x_i, x_{i + 1}) \), since \( \neg\neg(x \leqslant y \lor y \leqslant y) \).

Either \( \abs*{f(x_i) - y_i} < \nicefrac{\varepsilon}{2} \) for all \( 1 \leqslant i \leqslant n\), in which case \( f \) has no \(\varepsilon\)-steps, or there exists \( i \) such that \( \abs*{f(x_i) - y_i} > \nicefrac{\varepsilon}{5} \) and \( f \) is discontinuous at \( x_i \). In the latter case \WLPO holds by Lemma \ref{Lem:DiscImplOmniscience}.
\end{proof}

\begin{Theorem}\label{Thm:discont}
	Assume \(f:[0,1] \to \RR\) is increasing. Then there exists a sequence \( (\xi_n)_{n \geqslant 1} \) in \( [0, 1] \cup \{*\} \) such that \( \set{\xi_n}{\xi_n \in [0, 1]} \) is precisely the set of discontinuities of \( f \) in that 
	\begin{enumerate*}
  \item \label{discont_i} every \( \xi_n \in [0, 1] \) is a discontinuity of \( f \) and
  \item \label{discont_ii} if \( x \) is a discontinuity of \( f \), then there exists \( n \) such that \( x = \xi_n \).
\end{enumerate*}
\end{Theorem}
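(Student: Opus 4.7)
The plan is to harvest candidate points by repeatedly applying Corollary \ref{Cor:epssteps} at ever finer scales, then filter these candidates by checking oscillation at each using the one-sided limits from Proposition \ref{Pro:limits}, padding with \(*\) whenever we cannot yet confirm a genuine jump.

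First I would, for each \(k \geqslant 1\), apply Corollary \ref{Cor:epssteps} with \(\varepsilon = 1/k\) to obtain a finite partition \(A_k = \{x_1^{(k)},\dots, x_{n_k}^{(k)}\}\), and enumerate \(\bigcup_{k \geqslant 1} A_k \cup \{0,1\}\) as a single sequence of candidates \((c_m)_{m \geqslant 1}\). For each \(c_m\) I would invoke Proposition \ref{Pro:limits} to obtain the left limit \(L(c_m)\) and right limit \(R(c_m)\) (setting \(L(0) := f(0)\) and \(R(1) := f(1)\)) and define the oscillation \(\omega_m := R(c_m) - L(c_m)\). Since \(f\) is increasing, \(\omega_m \geqslant 0\), and \(c_m\) is a discontinuity of \(f\) precisely when \(\omega_m > 0\).

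The core construction is a diagonal enumeration of pairs \((m,j) \in \NN \times \NN\): at stage \(p\) corresponding to \((m,j)\), apply the basic dichotomy of reals to \(\omega_m\) to conclude either \(\omega_m > \frac{1}{j+1}\) or \(\omega_m < \frac{1}{j}\); in the first case set \(\xi_p = c_m\), and in the second set \(\xi_p = *\). Part \ref{discont_i} is then immediate: any \(\xi_p \in [0,1]\) arose from a confirmed \(\omega_m > \frac{1}{j+1} > 0\), so \(c_m\) is a genuine discontinuity. For part \ref{discont_ii}, given a discontinuity \(x\) with jump \(\delta := R(x) - L(x) > 0\), I would choose \(k\) large enough that the target levels \(y_1 < \dots < y_n\) constructed in the proof of Corollary \ref{Cor:epssteps} at scale \(1/k\) (which are spaced by less than \(1/(2k)\)) contain some \(y_{i_0}\) satisfying \(L(x) + 1/(2k) < y_{i_0} < R(x) - 1/(2k)\). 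Running the \(\IVTast\)-bisection on \(g := f - y_{i_0}\), the invariants \(g(a_n) < -\nicefrac{\varepsilon}{2}\) and \(g(b_n) > \nicefrac{\varepsilon}{2}\) together with monotonicity force \(a_n \leqslant x \leqslant b_n\) throughout, so the limit coincides with \(x\) as a real number; hence \(x = c_m\) for some \(m\), and at any subsequent stage \((m,j)\) with \(1/j < \delta\) the second alternative is refuted and the first must be chosen, yielding \(\xi_p = c_m = x\).

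The main obstacle is the coverage step in part \ref{discont_ii}, namely the metric identity \(x = c_m\): this is not a decidability statement but a genuine equality of reals, and its justification relies on a careful inspection of the bisection inside \IVTast\ to rule out the possibility that the algorithm terminates at some nearby \(z \neq x\) where \(\abs*{f(z) - y_{i_0}}\) happens to be small. Once \(y_{i_0}\) is pinned strictly inside the jump interval with margin \(\nicefrac{\varepsilon}{2}\), monotonicity forces every admissible bisection midpoint off of \(x\) to land outside the indecision window, so the \(\lambda_n = 0\) branch persists and \(a_n, b_n\) pinch against \(x\); the remaining details are routine enumeration together with the standard trichotomy for real numbers.
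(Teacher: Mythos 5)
Your architecture is essentially the paper's: candidates harvested from Corollary \ref{Cor:epssteps} at scales \(1/k\), one-sided limits from Proposition \ref{Pro:limits}, and a threshold filter that emits \(*\) when no jump is confirmed; part (\ref{discont_i}) and the filtering are fine (modulo the routine fact, which you should state, that a discontinuity of an increasing function has positive oscillation \(R(x)-L(x)>0\)). The problem is exactly the step you flag as the main obstacle, and your resolution of it is quantitatively wrong. In the \IVTast{} bisection the test available at a midpoint \(m\) is of the footnote's form ``\(g(m) < \varepsilon\) or \(g(m) > \nicefrac{\varepsilon}{2}\)'' (and its mirror image), so the window in which the algorithm may legitimately \emph{halt} has outer radius the full parameter \(\varepsilon\) of the \IVTast{} call, while \(\nicefrac{\varepsilon}{2}\) is only the recursion threshold. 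With your invariants \(g(a_n)<-\nicefrac{\varepsilon}{2}\), \(g(b_n)>\nicefrac{\varepsilon}{2}\) and a level pinned only with margin \(\nicefrac{\varepsilon}{2}\), a midpoint \(m<x\) satisfies \(g(m)\leqslant L(x)-y_{i_0}<-\nicefrac{\varepsilon}{2}\), which is perfectly compatible with \(\abs{g(m)}<\varepsilon\); the constructive disjunction may then return the halting alternative, the bisection terminates at \(z=m\neq x\), and the identity \(x=c_m\) on which your part (\ref{discont_ii}) rests fails. The repair is easy and stays inside your scheme: the margin must dominate the outer halting threshold. Either take \(k\) large enough that some level lies in \([L(x)+\varepsilon,\,R(x)-\varepsilon]\) (possible once \(R(x)-L(x)>\nicefrac{5\varepsilon}{2}\), since consecutive levels are spaced by less than \(\nicefrac{\varepsilon}{2}\)), or note that Corollary \ref{Cor:epssteps} actually invokes \IVTast{} at tolerance about \(\nicefrac{\varepsilon}{5}\), in which case your margin \(\nicefrac{\varepsilon}{2}\) is already ample but your invariants should then read \(\pm\nicefrac{\varepsilon}{10}\). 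With the corrected margin the argument closes: a midpoint strictly left (right) of \(x\) has \(g\leqslant-\varepsilon\) (respectively \(\geqslant\varepsilon\)), so the halting branch can only fire at a midpoint that is forced, by \(\neg(m<x)\) and \(\neg(m>x)\), to equal \(x\), and otherwise \(a_n\leqslant x\leqslant b_n\) pinches to \(x\) as you say.

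It is worth contrasting this with how the paper handles coverage, because the difference is instructive. The paper never claims the bisection output equals \(x\); it uses only the stated covering property of Corollary \ref{Cor:epssteps} to conclude \(\neg\neg(x\in S)\), where \(S\) is the finite set of confirmed \(\varepsilon_k\)-jump candidates at a suitable scale, and then eliminates the double negation by observing that equality with members of \(S\) is decidable, since each member is an \(\varepsilon_k\)-jump point of an increasing function. That route treats Corollary \ref{Cor:epssteps} as a black box and is immune to the constant-tracking that tripped you up; your route, once the margin is fixed, is also correct and somewhat more explicit about where the recovered discontinuity comes from, but it necessarily re-opens the proofs of Corollary \ref{Cor:epssteps} and Proposition \ref{Pro:IVTast}.
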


\begin{proof} 
For each \( \varepsilon_k = 1 / 2^k \  (k \geqslant 1) \) construct, using the previous corollary, \( x^k_1, \ldots, x^k_{m_k} \) such that if \( x \in [0, 1] \) is different form each \( x_i \), then \( x \) has a neighbourhood on which \( f \) varies by less than \( \varepsilon_k \). Let \( (k_n, i_n)_{n \geqslant 1} \) be an enumeration of \( \bigcup \{k\} \times \{1, \ldots, m_k\} \). We construct \( (\xi_n)_{n \geqslant 1} \) as follows. Either \( y^+ - y^- < \varepsilon_{k_n} \) or \( y^+ - y^- > 0 \) where \( y^-, y^+ \) are the left and right limits, respectively, of \( f \) at \( x^{k_n}_{i_n} \). In the first case we set \( \xi_n = * \) and in the second we set \( \xi_n = x^{k_n}_{i_n} \).

By construction, if \( \xi_n \in [0, 1] \) for some \( n \), then \( \xi_n \) is a point of discontinuity of \( f \). Now suppose that \( x \in [0, 1] \) is a point of discontinuity for \( f \). Then there exists \( k \) such that \( f \) has an \( \varepsilon_k \) discontinuity at \( x \) and thus \( \neg\lnot x\in S\) where \( S \) is the finite set \( \{\xi_n: k_n = k\} \). Since each point of \( S \) is an \( \varepsilon_k \) discontinuity of \( f \) and \( f \) is increasing, equality on \( S \) is decidable and we can find \( n \) such that \( x = \xi_n \).
\end{proof}

The above results easily generalise to a larger class of functions. Let \(\mathbb{F}\) be the smallest class of functions \([0, 1]\to\RR\) that contains all increasing functions and is closed under addition and scalar-multiplication; in other words \(\mathbb{F}\) is the span of increasing functions in the vector space of all functions \([0, 1]\to\RR\). 

 If \( P: a = x_{0} \leqslant x_{1} \leqslant \cdots \leqslant x_{n} = b \) is a partition of \(\left[a, b\right]\), then the corresponding polygonal approximation to \( f \) has variation
 \[
  v_{f, P} \equiv \sum_{i=1}^{n-1} \abs{f(x_{i + 1} - f(x_i)}.
 \]
We say that \( f \) has \define{finite variation} if its variation,
 \[
  \sup\left\{  v_{f,P} : P\text{ is a partition of }[0, 1]  \right\},
 \]
exists.\footnote{In \cite{dB00c} what we call having finite variation is simply called having a variation. Having finite variation should not be confused with having \define{bounded variation}, which is the requirement that the set mentioned is bounded. The two notions differ constructively, as shown in \cite{fR02}.} Every function \( f:[0, 1] \to\RR \) with finite variation can be decomposed into the sum of an increasing function and a decreasing function \cite[Corollary 8]{fR02}. That means that \(\mathbb{F}\) contains all functions of finite variation. However, in \cite{dB16} it is shown that if for every \(  g, h:[0, 1]\to\RR \) such that  \( g, h \) are increasing the function \( F = g - h \) has finite variation, then \LPO holds. Thus, at least constructively, the class of functions of finite variation is not closed under addition which means that the class \(\mathbb{F}\) is a true generalisation of it.

It is trivial that 
\begin{Corollary}
Any function in \(\mathbb{F}\) has left and right limits. 
\end{Corollary}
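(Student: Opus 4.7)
The plan is to observe that the class $\mathcal{L}$ of functions $[0, 1]\to\RR$ having both left and right limits contains every increasing function (by Proposition \ref{Pro:limits}) and is closed under addition and scalar multiplication. Since $\mathbb{F}$ is by definition the \emph{smallest} class of functions containing the increasing ones and closed under these two operations, we then have $\mathbb{F} \subseteq \mathcal{L}$, which is exactly the statement to be proved.

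For closure under addition, suppose $f$ has left limit $L_f$ at some $x \in (0, 1]$ and $g$ has left limit $L_g$ at $x$. Given $\varepsilon > 0$, pick $\delta_f, \delta_g > 0$ such that $\abs*{f(z) - L_f} < \nicefrac{\varepsilon}{2}$ for $z \in [x - \delta_f, x] \cap [0, 1]$ and similarly for $g$; then for $z \in [x - \delta, x] \cap [0, 1]$, where $\delta = \min\menge{\delta_f, \delta_g}$, the triangle inequality yields $\abs*{(f + g)(z) - (L_f + L_g)} < \varepsilon$. Right limits are handled identically. For scalar multiplication by $\alpha \in \RR$, the claim is that $\alpha L_f$ is the one-sided limit of $\alpha f$ wherever $L_f$ is the corresponding one-sided limit of $f$: given $\varepsilon > 0$, choose $\delta$ witnessing tolerance $\varepsilon/(\abs*{\alpha} + 1)$ for $f$, so that $\abs*{\alpha f(z) - \alpha L_f} \leqslant \abs*{\alpha}\cdot \varepsilon/(\abs*{\alpha}+1) < \varepsilon$. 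Dividing by $\abs*{\alpha}+1$ rather than $\abs*{\alpha}$ sidesteps the non-constructive decision of whether $\alpha = 0$.

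There is no real obstacle here: the argument reduces to the familiar arithmetic of limits and uses nothing beyond the triangle inequality and basic estimates, neither excluded middle nor any omniscience principle. This is why the corollary can reasonably be flagged as trivial once Proposition \ref{Pro:limits} is in hand; the work has already been done in establishing left and right limits for the increasing case.
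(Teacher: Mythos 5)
Your proof is correct and matches the paper's (implicit) argument exactly: the paper declares this corollary trivial precisely because, as you show, the class of functions with one-sided limits contains all increasing functions by Proposition \ref{Pro:limits} and is closed under addition and scalar multiplication, so it contains \(\mathbb{F}\) by minimality. Your use of the tolerance \(\varepsilon/(\abs*{\alpha}+1)\) to avoid deciding whether \(\alpha = 0\) is the right constructive touch.
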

More interestingly, we can also generalise Theorem \ref{Thm:discont} to \(\mathbb{F}\).
\begin{Corollary}
If \( f \in \mathbb{F} \), then there exists a sequence \( (\xi_n)_{n \geqslant 1} \) in \( [0, 1]\cup\{*\} \) such that 
\begin{enumerate*}
  \item \label{discontcor_i} every \( \xi_n \in [0, 1] \) is a discontinuity of \( f \) and
  \item \label{discontcor_ii} if \( x \) is a discontinuity of \( f \), then there exists \( n \) such that \( x = \xi_n \). 
\end{enumerate*}
\end{Corollary}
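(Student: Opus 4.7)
The plan is to exploit the vector-space structure of \(\mathbb{F}\). Since \(\mathbb{F}\) is by definition the span of increasing functions, I can write \(f = c_1 g_1 + \cdots + c_K g_K\) with each \(g_i \colon [0,1]\to\RR\) increasing and \(c_i \in \RR\). Applying Theorem \ref{Thm:discont} to each \(g_i\) separately gives me a sequence \((\xi^{(i)}_n)_{n\geqslant 1}\) in \([0,1]\cup\{*\}\) whose \([0,1]\)-valued entries enumerate the discontinuities of \(g_i\). Interleaving these \(K\) sequences produces a single sequence \((\eta_m)_{m\geqslant 1}\) whose non-\(*\) entries together form the union of the discontinuity sets of all the \(g_i\)---a pool of candidates for discontinuities of \(f\).

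This pool typically over-approximates, because cancellation between the summands \(c_i g_i\) can turn an \(\eta_m\) into a regular point of \(f\). To sieve, I invoke the previous corollary, which guarantees left and right limits of \(f\) at every point. For each \(\eta_m \in [0,1]\) and each \(p \geqslant 1\), I set
\[
  J_m \ =\ \max\bigl\{\, \lvert f(\eta_m^+)-f(\eta_m)\rvert,\ \lvert f(\eta_m^-)-f(\eta_m)\rvert \,\bigr\}
\]
and apply the cotransitive comparison of \(J_m\) against \(1/2^p\): either \(J_m < 1/2^p\), in which case I record a \(*\), or \(J_m > 0\), in which case \(f(\eta_m^+)\neq f(\eta_m)\) or \(f(\eta_m^-)\neq f(\eta_m)\), making \(\eta_m\) a genuine discontinuity of \(f\), and I record \(\eta_m\). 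Re-indexing along a bijection \(\NN \times \NN \to \NN\) yields the desired sequence; every non-\(*\) entry is then a discontinuity by construction.

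The hard part will be the completeness direction: showing that every discontinuity of \(f\) actually appears among the \(\eta_m\). My plan is to route this through the one-sided limits. Since each \(g_i\) has left and right limits everywhere, so does any finite linear combination, and \(f(x^\pm) = \sum_i c_i g_i(x^\pm)\) holds. If \(x\) is a discontinuity of \(f\), then by the characterisation of continuity via left/right limits and value, either \(\sum_i c_i (g_i(x^+)-g_i(x)) \neq 0\) or \(\sum_i c_i (g_i(x^-)-g_i(x)) \neq 0\). A finite case analysis on the \(K\) summands---selecting the index whose absolute contribution exceeds a strictly positive fraction of the total---then produces some \(i\) with \(g_i(x^+)\neq g_i(x)\) or \(g_i(x^-)\neq g_i(x)\), which exhibits \(x\) as a discontinuity of \(g_i\). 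Hence \(x = \eta_m\) for some \(m\), and because the resulting \(J_m\) is strictly positive, the sieve in the previous paragraph will record \(x\) for all sufficiently large \(p\).
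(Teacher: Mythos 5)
Your argument is essentially correct, but it is organised quite differently from the paper's. The paper proves the corollary by structural induction on the construction of \(f\in\mathbb{F}\): the scalar case \(f=kg\) needs a separate trick (an increasing binary sequence tracking whether \(k\neq 0\), since one cannot decide \(k=0\lor k\neq0\)), and the addition case \(f=g+h\) interleaves the two inherited sequences and then sieves with countable choice by comparing the \emph{left limit against the right limit}, \(\abs*{f(\zeta^-)-f(\zeta^+)}\), against thresholds \(\nicefrac{1}{\ell_n}\) and \(\nicefrac{1}{2\ell_n}\). You instead flatten \(f\) once and for all into a finite combination \(c_1g_1+\cdots+c_Kg_K\) (legitimate, since membership in \(\mathbb{F}\) carries the same constructional witness the paper's induction uses), pool the discontinuity enumerations of the \(g_i\) from Theorem \ref{Thm:discont}, and sieve by comparing the one-sided limits against the \emph{value}, \(J_m=\max\menge{\abs*{f(\eta_m^+)-f(\eta_m)},\abs*{f(\eta_m^-)-f(\eta_m)}}\), over all precisions \(p\). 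This buys you two things: the awkward possibly-zero-scalar case disappears entirely (your sieve is against \(f\) itself, so over-approximation of the candidate pool is harmless), and the value-based criterion also records points where the left and right limits of \(f\) agree but differ from \(f(x)\) --- a configuration that differences of increasing functions do not obviously exclude, and which the paper's left-versus-right-limit sieve does not address; in this respect your sieve is the more robust one. Two points you should still spell out: the completeness step ``\(x\) a discontinuity of \(f\) implies \(f(x^+)\neq f(x)\) or \(f(x^-)\neq f(x)\)'' is not immediate constructively, because one cannot decide whether the witnesses \(x_n\) lie to the left or right of \(x\); it does go through by two cotransitive comparisons of \(\abs*{f(x^\pm)-f(x)}\) against \(\nicefrac{\varepsilon}{2}\), deriving in the remaining case \(\neg(x_n<x)\land\neg(x_n>x)\), hence \(x_n=x\) by tightness, a contradiction. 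The rest of your completeness argument (linearity of one-sided limits, the finite pigeonhole producing \(i\) with \(\abs*{c_i}\abs*{g_i(x^\pm)-g_i(x)}>0\), and monotonicity turning this into a genuine discontinuity of \(g_i\)) is constructively sound, as is the observation that \(J_m>0\) forces the sieve to record \(\eta_m\) at all sufficiently fine \(p\). Finally, note that your sieve, like the paper's, uses countable choice to select disjuncts uniformly over all pairs \((m,p)\), and that at the endpoints \(0\) and \(1\) only one of the one-sided limits exists, so \(J_m\) should be defined with the available side only.
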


\begin{proof}
The proof proceeds by induction on the construction of \( f \in \mathbb{F} \), the base case of which is given by Theorem \ref{Thm:discont}. 

Let \( f = kg \) for some \( k \in \RR \) and \( g \in \mathbb{F} \). If \( k \neq 0 \), then the sequence \( (\xi_n)_{n \geqslant 1} \) enumerating only the discontinuities of \( g \) and \( \{*\} \), that we can construct using our induction hypothesis, also enumerates only the discontinuities of \( f \) and \( \{*\} \). For an arbitrary \( k \in \RR \) we proceed as follows. Fix, using countable choice, an increasing binary sequence \( (\lambda_n)_{n \geqslant 1} \) such that \( k \neq 0 \) if and only if there exists \( n \) such that \( \lambda_n = 1 \); we extend \( \lambda \) to \( \NN \) by setting \( \lambda_0 = 0 \). Then the sequence \( (\zeta_n)_{n \geqslant 1} \) given by
 \[
  \zeta_n = \begin{cases}
                 *                  & \text{if } \lambda_n = 0  \\
                 \xi_{(n - m + 1)}     & \text{if } \lambda_m = 1 - \lambda_{m - 1} \text{ and } n \geqslant m 
               \end{cases} \ 
 \]
enumerates only the discontinuities of \( f \) and \( \{*\} \).

Let \( f = g + h \) where \( g, h \in \mathbb{F} \). Applying our induction hypothesis to both of \( g, h \) we construct sequences \( (\xi^0_n)_{n \geqslant 1}, (\xi^1_n)_{n \geqslant 1} \) that precisely enumerate the discontinuities of \(g, h \), respectively, together with \( \{*\} \). Let 
 \[
  \zeta_n = \begin{cases}
                 \xi^0_{n / 2}       & \text{if } n \text{ is even} \\
                 \xi^1_{(n + 1) / 2} & \text{if } n \text{ is odd} \ .
               \end{cases} 
 \] 
If \( x \) is a discontinuity of \( f \), then \( x \) is a discontinuity of one of \( g, h \). Thus there exist \( i, m \) such that \( x = \zeta_m^i \), so \( x = \zeta_n \) for \( n = 2m + i \). It remains to refine \( (\zeta_n)_{n\geqslant 1}\) to satisfy (\ref{discontcor_i}), as well as (\ref{discontcor_ii}). Let \( (k_n, \ell_n)_{n \geqslant 1} \) be an enumeration of \( \NN^+ \times \NN^+ \) and, using countable choice, construct a binary sequence \( (\lambda_n)_{n \geqslant 1}\) such that
\begin{align*}
	\lambda_n = 0 & \implies \abs*{f(\zeta_{k_n}^-) - f(\zeta_{k_n}^+)} < \frac{1}{\ell_n}\ \ , \\
	\lambda_n = 1 & \implies \abs*{f(\zeta_{k_n}^-) - f(\zeta_{k_n}^+)} > \frac{1}{2\ell_n} \ ,
\end{align*}
where \( f(x^-), f(x^+) \) are the left and right limits, respectively, of \( f \) at \( x \). Define \( (\xi_n)_{n\geqslant 1} \) by 
 \[
  \xi_n = \begin{cases}
                 *          & \text{if } \lambda_n = 0\\
                 \zeta_{k_n}  & \text{if } \lambda_n = 1\ .
               \end{cases} 
 \] 
Then (\ref{discontcor_i}) and (\ref{discontcor_ii}) hold for \((\xi_n)_{n\geqslant 1} \).
\end{proof}

\section{Decomposing Sets.}
It is well known that, constructively, we cannot prove
\[ \fa{a \in \RR}{a \leqslant 0 \ \lor \ 0 < a }  \ , \]
which means for no \(a \in \RR\) can we decompose \(\RR\) into \((-\infty,a] \cup (a, \infty)\)  (also see \cite{iL08}). Of course, it is just as well known that if \(a<b\), then we do have \[ \fa{x \in \RR}{x \leqslant b \ \lor \ 0 < a }  \ , \] which means \(\RR = (-\infty,b] \cup (a, \infty)\). We can turn this situation around to prove the following impossible theorem.
\begin{Proposition} \label{Pro:DecSets}
If \[ \RR = (-\infty,b ] \cup (a, \infty) \]	
then either   \(a < b \) or \( a = b\). \\
Furthermore in case the second alternative holds, \LPO holds.
\end{Proposition}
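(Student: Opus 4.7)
The plan is to derive both alternatives from two strategic applications of the decomposition hypothesis, and not to reach for any of the heavier machinery developed in the rest of the paper. First, I would apply the hypothesis to the real number \(a\) itself: the resulting alternative \(a > a\) is absurd, so the hypothesis automatically delivers \(a \leqslant b\).

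The crucial step is then to apply the hypothesis to the cleverly chosen point \(x = 2b - a\). This point is engineered so that each of its two alternatives pins down precisely one case of the desired dichotomy: the alternative \(x \leqslant b\) rearranges to \(b \leqslant a\), which combined with the already established \(a \leqslant b\) forces \(a = b\); while the alternative \(x > a\) rearranges to \(2b > 2a\), i.e.\ \(a < b\). Thus the native disjunction of \(\RR = (-\infty, b] \cup (a, \infty)\) applied to this single real yields exactly the disjunction we want.

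For the ``furthermore'' part, I would assume we are in the second alternative, so \(a = b\). Given an arbitrary binary sequence \((\alpha_n)_{n \geqslant 1}\), form the non-negative real \(r = \sum_{n \geqslant 1} 2^{-n} \alpha_n\) and feed \(a + r\) into the hypothesis. The alternative \(a + r \leqslant b = a\) forces \(r \leqslant 0\), and since \(r \geqslant 0\) this gives \(r = 0\), so \(\alpha_n = 0\) for all \(n\); the alternative \(a + r > a\) gives \(r > 0\), from which a concrete \(n\) with \(\alpha_n = 1\) can be extracted by truncating the series (pick \(N\) with \(2^{-N} < r/2\), so that the tail \(\sum_{n > N} 2^{-n} \alpha_n\) is strictly smaller than the head, forcing at least one of \(\alpha_1, \ldots, \alpha_N\) to be \(1\), which we find by finite search). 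This is exactly \LPO.

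The only real obstacle is guessing the test point \(x = 2b - a\) in the middle paragraph: once it is written down, the rest of the argument is routine arithmetic plus two further plain invocations of the decomposition, with no appeal to the \(\circledast\) construction, Ishihara's tricks, or \IVTast required.
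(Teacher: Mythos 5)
Your proof is correct and essentially the same as the paper's: the decisive move is the identical test point \(2b-a = b + (b-a)\), whose two alternatives yield \(a<b\) and (together with \(a \leqslant b\)) \(a=b\). The only cosmetic differences are that you obtain \(a \leqslant b\) by feeding \(a\) itself into the decomposition (ruling out the absurd \(a>a\)) where the paper argues \(\neg(a>b)\), and that you spell out the standard derivation of \LPO from \(a=b\) (via \(a+\sum 2^{-n}\alpha_n\)), which the paper simply cites as well known.
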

\begin{proof}
	Consider \(z = b + (b-a)\). If \(z \in (a, \infty)\) then \(2b-a > a\), which means that \(2b >2a\), that is \(b > a\). If \(z \in (-\infty,b]\) then \(2b-a \leqslant b\), which means that \(b \leqslant a\). It is also easy to see that \(a \leqslant b\) (since the assumption that \(a > b\) leads to a contradiction), so, if \(z \in (-\infty,b]\), then \(a = b\). It is well known that if \(a=b\), then \LPO holds.
\end{proof}

The previous result is actually a special case of a more general theorem, whose proof, however, is no longer algebraic and requires countable choice.

\begin{Proposition}
If \(Q\) be a located\footnote{A subset \(A\) of a metric space \((X,\rho)\) is \define{located} if the distance
\[ \rho(A,x) = \inf \set{\rho(a,x)}{a \in A} \]
exists for all \(x\in X\). In CLASS all non-empty sets are located, but constructively that is not always guaranteed.} subset of a complete enough metric space \((X, \rho)\) and \(x \in X\) is such that \[ \fa{z \in X}{z \neq x \ \lor z \notin Q} \ , \]
then either \(\rho(Q,x) > 0\) or \(\rho(Q,x) = 0\). \\ Furthermore, in case the second alternative holds, \LPO holds.
\end{Proposition}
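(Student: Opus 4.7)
The plan is to mimic the positive diagonalisation pattern of Proposition \ref{Pro:IVTast}: build a Cauchy sequence \((q_n)\) in \(X\) whose limit \(z\) is informative once we apply the hypothesis \( z \neq x \lor \neg(z \in Q) \). In the case the construction produces \(z = x\), the sequence will directly witness \(\rho(Q,x) = 0\); in the case it produces \(z\) sitting inside \(Q\), the hypothesis forces the other branch and unwinds to \(\rho(Q,x) > 0\).

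First, using locatedness of \(Q\) together with countable choice, I would produce a binary sequence \((\mu_n)_{n \geqslant 1}\) with \(\mu_n = 0 \implies \rho(Q,x) < 2^{-n}\) and \(\mu_n = 1 \implies \rho(Q,x) > 2^{-n-1}\). Setting \(\lambda_n = \max\{\mu_1, \ldots, \mu_n\}\) yields an increasing binary sequence with \(\lambda_n = 0 \implies \rho(Q,x) < 2^{-n}\) and \(\lambda_n = 1 \implies \rho(Q,x) > 0\). (If \(\lambda_1 = 1\) we already have the first alternative; otherwise \(\lambda_1 = 0\) and we proceed.) Whenever \(\lambda_n = 0\) pick, by countable choice, some \(q_n \in Q\) with \(\rho(q_n, x) < 2^{-n}\); whenever \(\lambda_n = 1\) set \(q_n := q_m\) where \(m\) is the largest index with \(\lambda_m = 0\). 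This sequence is Cauchy: it converges to \(x\) when \(\lambda \equiv 0\) and is eventually the constant \(q_m \in Q\) when \(\lambda\) switches at step \(m+1\). By completeness, let \(z = \lim_n q_n \in X\).

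Now I apply the hypothesis to \(z\). If \(z \neq x\), pick \(N\) so large that \(2^{-N} < \rho(z,x)/2\) and \(\rho(q_N, z) < \rho(z,x)/2\); then \(\rho(q_N, x) > 2^{-N}\), which rules out \(\lambda_N = 0\) (since that branch guarantees \(\rho(q_N, x) < 2^{-N}\)), so \(\lambda_N = 1\) and \(\rho(Q,x) > 0\). If instead \(\neg(z \in Q)\), then assuming \(\lambda\) switches at some index \(m+1\) would give \(z = q_m \in Q\), a contradiction; hence, by decidability of \(\lambda_n\), we conclude \(\lambda_n = 0\) for every \(n\), so \(\rho(Q,x) < 2^{-n}\) for every \(n\), i.e.\ \(\rho(Q,x) = 0\). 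This proves the main disjunction.

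For the ``Furthermore'', suppose \(\rho(Q,x) = 0\) and let \((a_n)\) be an arbitrary binary sequence. Set \(\lambda_n = \max\{a_1, \ldots, a_n\}\); since \(\rho(Q,x) = 0\), countable choice provides \(q_n \in Q\) with \(\rho(q_n, x) < 2^{-n}\) \emph{unconditionally}, and the same \(\lambda\)-driven construction gives a limit \(z \in X\). Applying the hypothesis to \(z\) and running the same case split yields either some \(N\) with \(\lambda_N = 1\) (hence some \(a_n = 1\)) or \(\lambda_n = 0\) for every \(n\) (hence \(a_n = 0\) for every \(n\)), which is \LPO. The one delicate point throughout is that ``\(z \notin Q\)'' must be read as \(\neg(z \in Q)\) rather than as \(\rho(z, Q) > 0\); but in the switching branch our \(z\) is literally equal to some \(q_m \in Q\), so the contradiction is direct and no Markov-style closure is needed.
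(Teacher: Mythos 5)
Your proposal is correct and follows essentially the same route as the paper's proof: locatedness plus countable choice yields the guiding binary sequence and approximants \(q_n\), the \(\lambda\)-frozen Cauchy sequence is fed to the hypothesis at \(z\), and a second such construction driven by an arbitrary binary sequence gives \LPO. The only (harmless) cosmetic differences are that the paper's first sequence uses \(x\) itself on the \(\lambda_n=0\) branch rather than \(q_n\), and it derives \LPO under the assumption \(\fa{n}{\lambda_n=0}\) rather than directly from \(\rho(Q,x)=0\); also note that Cauchyness should be justified by the uniform estimate \(\rho(q_i,q_j)<2^{-N+1}\) for \(i,j\geqslant N\) with \(\lambda_N=0\) (as in the \(\circledast\) lemma cited in the paper) rather than by the undecidable case split ``\(\lambda\equiv 0\) or \(\lambda\) switches''.
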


\begin{proof}
Using countable choice and the locatedness of \(Q\), fix a binary sequence \( (\lambda_n)_{n \geqslant 1} \) and a sequence \((q_n)_{n \geqslant 1} \) in \(Q\), such that 
\begin{align*}
	\lambda_n = 0 & \implies \rho(q_n,x) < \frac{1}{2^n} \ , \\
	\lambda_n = 1 & \implies \rho(Q,x) > \frac{1}{2^{n+1}} \ .
\end{align*}
Then \( (\lambda_n)_{n \geqslant 1} \) is increasing and if \(\lambda_n = 0\), then \(\rho(Q,x) < \nicefrac{1}{2^n}\).

Now define a sequence \( (x_n)_{n \geqslant 1} \) by
\begin{align*}
	\lambda_n = 0 & \implies x_n = x \ , \\
	\lambda_n = 1 & \implies x_n = q_m, \text{ where } \lambda_{m}=1- \lambda_{m+1}  \ .	
\end{align*}

The sequence \((x_n)_{n \geqslant 1}\) is easily seen to be a Cauchy sequence. It therefore converges to a limit \(y \in X\).

Notice that if \( \lambda_N = 0 \), then for all \(i \geqslant N\) we have \(\rho(x_i,x) < \nicefrac{1}{2^N}\): either \(\lambda_i = 0\) and so \(\rho(x_i,x) = \rho(x,x) = 0 \), or \(\lambda_i = 1\), which means that \(x_i = q_m\) for some \(i > m \geqslant N \), and therefore \[ \rho(x_i,x) = \rho(q_m,x) < \frac{1}{2^m} \leqslant \frac{1}{2^N} \ .\]
That means that 
\begin{equation} \label{Eqn:decomplimit}
	\lambda_N = 0 \implies \rho(y,x) \leqslant \frac{1}{2^N} \ .
\end{equation}

Now either \(y \notin Q \) or \(y \neq x\). In the first case we must have \( \lambda_n = 0 \) for all \(n \in \NN\), and therefore \(\rho(Q,x) = 0\). In the second case there exists \( N\) such that \( \rho(x,y) > \nicefrac{1}{2^N} \).  For that \(N\) we must have \( \lambda_N = 1 \), since, as shown in \eqref{Eqn:decomplimit}, \( \lambda_N = 0 \) implies that  \( \rho(x, y) \leqslant  \nicefrac{1}{2^N} \).

For the rest of the proof assume that \(\fa{n \in \NN}{\lambda_n=0}\). To see that \LPO holds let \( (a_n)_{n \geqslant 1}\) be an increasing binary  sequence. Define \((z_n)_{n \geqslant 1}\) by
\begin{align*}
	a_n = 0 & \implies z_n = q_n \ , \\
	a_n = 1 & \implies z_n = q_m, \text{ where } a_{m}=1- a_{m+1}  \ .	
\end{align*}
Again, this is easily seen to be a Cauchy sequence; let \(z\) be its limit. By our assumption either \(z \neq x\) or \(z \notin Q\). In the second case there cannot be \(n \) such that \(a_n=1\), since then \( z = q_m \) for some \(m \leqslant n\) and therefore \(z \in Q\); thus in the second case \(\fa{n \in \NN}{a_n=0}\). In the first case, choose \(n\) such that \(\rho(x,z) > \nicefrac{1}{2^n}\). We must have \(a_n = 1\), since if \(a_n=0\) we have \(\rho(z_n, x) =  \rho(q_n, x) < \nicefrac{1}{2^n}\) and therefore, after taking the limit of the left hand side, \(\rho(z,x) \leqslant \nicefrac{1}{2^n}\).
Thus \LPO holds. 
\end{proof}

\section{Neatly Located}

In \cite{hD08a} and \cite[Chapter 3]{hD08b} the notion of a neatly located set in an apartness space is introduced, which underlies another impossible theorem. Formulated for metric spaces the definition is as follows.

\begin{Definition}
We say that two subsets \(S,T\) of a metric space \((X, \rho)\) are, or form, a \define{neat covering} if there exist \( \varepsilon>0\) and \(S^{\prime},T^{\prime}\) such that 
\[ \fa{s \in S^{\prime}, t \in T^{\prime} }{\rho(s,t)} > \varepsilon \ , X=T\cup T^{\prime}, \text{ and }X=S\cup S^{\prime} \ . \]
The following diagram might be more illustrative than the definition.
\begin{center}
\begin{tikzpicture}[y = 0.4cm]
\draw[|-] (6.3,0) -- node[above] {\(S^\prime\)} (10,0);
\draw[-|] (0,-0.5) -- node[above] {\(S\)} (8,-0.5);
\draw[|-] (2,-1) -- node[below] {\(T\)} (10,-1);
\draw[-|] (0,-1.5) -- node[below] {\(T^\prime\)} (3.7,-1.5);
\end{tikzpicture}
\end{center}

We call an inhabited subset \(A\) of  \(X\) \define{neatly located} if for all neat coverings \(S,T\) of \(X\), either \(A\subset S\) or \(A\cap T\neq\emptyset\).	
\end{Definition}

Classically every set is neatly located, which is a trivial consequence of \LEM. However, constructively, \(\RR\) is neatly located if and only if \LPO holds \cite[Proposition 2.8]{hD08a}; on the other hand one can show that all totally bounded sets are neatly located \cite[Proposition 2.5]{hD08a}. Again, we can turn this situation on its head and  prove an impossible theorem \cite[Proposition 2.9]{hD08a}.
\begin{Proposition}
If \(A \) is a neatly located subset of a separable metric space \((X,\rho) \) and 	\(\varepsilon >0 \), then either there is a finite \(\varepsilon\)-approximation to \(A\), or there exists a sequence \(x_n\) in \(A\) such that \(\fa{i,j}{\rho(x_i,x_j) > \nicefrac{\varepsilon}{2}}\). \\
Furthermore in case the second alternative holds, \LPO holds.
\end{Proposition}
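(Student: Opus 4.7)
The plan is to iteratively attempt to build an \(\nicefrac{\varepsilon}{2}\)-separated sequence in \(A\), using neat locatedness at each stage to either supply the next point or certify that \(A\) already lies inside the current \(\varepsilon\)-neighbourhood (giving the finite approximation).

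First I would fix \(x_1 \in A\) (inhabitedness is part of the definition of neatly located) and set \(\lambda_1 = 0\). By dependent choice, I would construct sequences \((x_n)_{n\geqslant 1}\) in \(A\) and an increasing binary \((\lambda_n)_{n\geqslant 1}\) as follows: if \(\lambda_n = 1\), set \(x_{n+1} = x_n\) and \(\lambda_{n+1} = 1\); if \(\lambda_n = 0\), put \(C_n = \{x_1, \ldots, x_n\}\) and consider the candidate neat covering
\[ S_n = \{z \in X : \rho(z, C_n) < \varepsilon\}, \quad T_n = \{z \in X : \rho(z, C_n) > \nicefrac{\varepsilon}{2}\}, \]
with certifiers
\[ S_n^\prime = \{z \in X : \rho(z, C_n) > \nicefrac{2\varepsilon}{3}\}, \quad T_n^\prime = \{z \in X : \rho(z, C_n) < \nicefrac{7\varepsilon}{12}\}. \]
Cotransitivity of \(\RR\) gives \(X = S_n \cup S_n^\prime\) and \(X = T_n \cup T_n^\prime\); for \(s \in S_n^\prime\) and \(t \in T_n^\prime\), picking \(x_j \in C_n\) with \(\rho(t, x_j) < \nicefrac{7\varepsilon}{12}\) and using \(\rho(s, x_j) \geqslant \rho(s, C_n) > \nicefrac{2\varepsilon}{3}\), the triangle inequality yields \(\rho(s, t) > \nicefrac{\varepsilon}{12}\), so \((S_n, T_n)\) is indeed a neat covering. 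Applying neatly located, either \(A \subset S_n\)---so \(\{x_1, \ldots, x_n\}\) is a finite \(\varepsilon\)-approximation of \(A\), and I set \(\lambda_{n+1} = 1\), \(x_{n+1} = x_n\)---or there is \(y \in A \cap T_n\), and I set \(\lambda_{n+1} = 0\), \(x_{n+1} = y\).

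The construction either hits some \(\lambda_N = 1\) (delivering the first alternative via \(\{x_1, \ldots, x_N\}\)) or keeps \(\lambda_n = 0\) for all \(n\); in the latter case \(x_{n+1} \in T_n\) propagates by induction to \(\rho(x_i, x_j) > \nicefrac{\varepsilon}{2}\) for \(i \neq j\). For the ``furthermore'' clause, assuming the second alternative, I would derive \LPO from an arbitrary increasing binary sequence \((a_n)_{n\geqslant 1}\) (the general case reduces via \(a_n = \max\{b_1, \ldots, b_n\}\)) by defining an eventually constant---hence Cauchy---sequence
\[ y_n = \begin{cases} x_1 & \text{if } a_n = 0 \\ x_{m+1} & \text{if } a_m = 1 - a_{m-1} \text{ with } m \leqslant n \end{cases} \]
(taking \(a_0 = 0\)) in \(A\); then the real number \(r := \lim_n \rho(y_n, x_1)\) equals \(0\) if \((a_n)\) is all zero, and exceeds \(\nicefrac{\varepsilon}{2}\) otherwise. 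Cotransitivity gives \(r < \nicefrac{\varepsilon}{4}\) or \(r > 0\): the former excludes \(r > \nicefrac{\varepsilon}{2}\) and thus forces \(a_n = 0\) for all \(n\); the latter gives some \(N\) with \(\rho(y_N, x_1) > 0\), which excludes \(y_N = x_1\) and so forces \(a_N = 1\).

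The hard part I anticipate is calibrating the certifier radii so that their difference \(\nicefrac{2\varepsilon}{3} - \nicefrac{7\varepsilon}{12} = \nicefrac{\varepsilon}{12}\) is strictly positive; any naive choice---for instance, using the primary-set thresholds \(\varepsilon\) and \(\nicefrac{\varepsilon}{2}\) themselves---would leave the certifiers overlapping and destroy the neat covering requirement.
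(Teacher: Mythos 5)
Your individual steps with the coverings \(S_n, T_n, S_n', T_n'\) are fine (the calibration giving the gap \(\nicefrac{\varepsilon}{12}\) is correct, and each single appeal to neat locatedness is legitimate because \(C_n\) is finite, hence located), but the proof breaks exactly where the proposition earns its ``impossible'' label: after running the dependent-choice construction you case-split on ``either the construction hits some \(\lambda_N=1\) or it keeps \(\lambda_n=0\) for all \(n\)''. For an increasing binary sequence produced by an unbounded search this dichotomy \emph{is} an instance of \LPO, so you are assuming precisely the kind of omniscience the theorem is supposed to avoid; nothing in your argument turns the infinitely many local decisions into the single global disjunction of the statement. It is telling that your proof never uses the separability of \(X\), which is a hypothesis of the proposition and is what the actual argument exploits to package the decision into finitely many applications of neat locatedness rather than into a termination question about an infinite process.

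The ``furthermore'' clause has a second, independent gap: your sequence \((y_n)\) is not provably Cauchy. Its only possible jump, from \(x_1\) to \(x_{m+1}\), has size greater than \(\nicefrac{\varepsilon}{2}\) and occurs at an unknown (possibly nonexistent) index, so no Cauchy modulus can be exhibited; ``eventually constant in each of the two undecided cases'' does not yield convergence, and \(r=\lim_n\rho(y_n,x_1)\) is a non-decreasing sequence whose convergence is essentially the very decision you are trying to make (constructive monotone convergence fails). A quick sanity check shows the conclusion you aim for cannot follow from your hypotheses alone: the sequence \(x_n=n\) in \(\RR\) is \(1\)-separated, yet its existence certainly does not imply \LPO. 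Hence the derivation of \LPO in the second alternative must make further use of the neat locatedness of \(A\) (building a new neat covering from the separated sequence and the given binary sequence), which your argument never does.
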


Very loosely interpreted, this says that in practice, a neatly located set is either totally bounded or it is not and \LPO holds.

It is also worth pointing out that neat locatedness behaves very canonically. For example it is easily seen to be preserved under strongly continuous mappings, which is a form of continuity lying strictly between point-wise and uniform continuity \cite{dB06}.

\section{Positive Infima of Convex Functions}

In \cite[Theorem 1]{jB16} it is shown that a uniformly continuous, quasi-convex function \(f : C \to R^+\) defined on a convex and compact subset \(C\) of \( \RR^n \) has a positive infimum. 
Here a function is called \define{quasi-convex} if for all \(x, y\in X\) and every \(\lambda\in[0,1]\)
  \[
  f(\lambda x+(1-\lambda)y) \leqslant \max\{f(x), f(y)\} \ .
  \] 
This is an interesting result in constructive mathematics, since without the assumption of quasi-convexity the statement lies on the demarcation line between RUSS on the one side and INT on the other side. In the former there  actually exists a uniformly continuous function \(f : [0,1] \to R^+\) with \(\inf f = 0\) (see for example \cite{Beeson1985}). In INT the statement is true as a consequence of the fan theorem for decidable bars \cite{dB87}.

At least for the unit interval, we can actually improve \cite[Theorem 1]{jB16} and replace uniform continuity by the assumption that the infimum of each subinterval exists. What we get is, yet another, impossible theorem.

\begin{Proposition} \label{Pro:impconvex}
Let \(f:[0, 1]\to\RR^+\) be a quasi-convex function such that \(\inf\set{f(x)}{x\in I}\) exists for each sub-interval \(I\) of \([0, 1]\). Then either the infimum of \(f\) on \([0, 1]\) is positive or \(\inf f = 0 \). \\
Furthermore in case the second alternative holds \(f\) has a discontinuity and \WLPO holds.
\end{Proposition}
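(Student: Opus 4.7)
The plan is to follow the ``positive diagonalisation'' template already used for Propositions~\ref{Pro:IshTrickImproved} and~\ref{Pro:IVTast}: I will build a single Cauchy sequence \((x_n)\) with limit \(x_\infty\) alongside an increasing binary sequence \((\lambda_n)\), and then decode which alternative actually holds by inspecting one bit of \(\lambda\) at a scale dictated by \(f(x_\infty)\). Let \(\mu = \inf_{[0,1]} f \geqslant 0\), which exists by hypothesis. For each \(n\) cotransitivity of \(<\) decides \(\mu < 2^{-n}\) or \(\mu > 2^{-(n+1)}\), and countable choice yields an increasing binary \((\lambda_n)\) with \(\lambda_n = 0 \implies \mu < 2^{-n}\) and the first \(1\) at step \(m\) witnessing \(\mu > 2^{-(m+1)}\). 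While \(\lambda_n = 0\), inf-locatedness on \([0,1]\) lets me pick any \(x_n\) in the quasi-convex sub-level set \(V_n := \{x \in [0,1] : f(x) < 2^{-n}\}\) (non-empty since \(\mu < 2^{-n}\)); once \(\lambda\) first becomes \(1\) at some step \(m \geqslant 2\), I freeze the sequence by setting \(x_k := x_{m-1}\) for all \(k \geqslant m\) (and if \(\lambda_1 = 1\) we are already done with the first alternative).

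I next show \((x_n)\) is Cauchy. If \(\lambda\) jumps, the sequence is eventually constant, so assume we are in the ``always \(0\)'' case. By quasi-convexity each \(V_n\) is a sub-interval, and point-wise positivity forces the nested intersection to be empty (any common point \(x\) would satisfy \(f(x) \leqslant 0\)). Using the inf-located hypothesis on intervals of the form \([0,a]\) and \([a,1]\) to realise the left and right endpoints \(\alpha_n, \beta_n\) of each \(V_n\) as actual real numbers, I obtain monotone bounded sequences \(\alpha_n \uparrow \alpha_\infty\) and \(\beta_n \downarrow \beta_\infty\); any rational strictly between \(\alpha_\infty\) and \(\beta_\infty\) would lie in every \(V_n\) and have \(f\)-value \(\leqslant 0\), so \(\alpha_\infty = \beta_\infty\) and \(\mathrm{diam}(V_n) \to 0\). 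Since \(x_n, x_{n'} \in V_{\min(n,n')}\), Cauchy-ness follows, and I let \(x_\infty\) be the limit.

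Now for the decoding. Because \(f > 0\) pointwise, \(f(x_\infty) > 0\), so I pick \(N\) with \(2^{-N} < f(x_\infty)/2\) and inspect the single bit \(\lambda_N\). If \(\lambda_N = 1\), then some \(m \leqslant N\) yielded \(\mu > 2^{-(m+1)} > 0\), giving the first alternative. If \(\lambda_N = 0\), I claim \(\lambda_n = 0\) for every \(n\): were \(\lambda\) to jump first at any \(m > N\), the freezing rule would give \(x_\infty = x_{m-1}\) with \(f(x_\infty) = f(x_{m-1}) < 2^{-(m-1)} \leqslant 2^{-N} < f(x_\infty)/2\), absurd. Hence \(\mu < 2^{-n}\) for every \(n\), so \(\mu = 0\); and the sequence \(x_n \to x_\infty\) with \(f(x_n) \to 0\) while \(f(x_\infty) > 0\) is itself a witness of discontinuity of \(f\) at \(x_\infty\), from which Lemma~\ref{Lem:DiscImplOmniscience} delivers \WLPO.

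The hard part will be the middle paragraph, specifically making ``\(\alpha_n\) and \(\beta_n\) are real numbers'' and ``\(\mathrm{diam}(V_n) \to 0\)'' precise in BISH. This amounts to promoting the inf-located hypothesis on closed sub-intervals to locatedness of each \(V_n\) as a subset of \([0,1]\) (via bisection on \(a\) using the reals \(\inf_{[0,a]} f\) and \(\inf_{[a,1]} f\)), and then running the standard monotone-limit plus point-wise positivity argument against a rational in the putative gap. Everything after that is the familiar positive-diagonalisation decoding already used in~\cite{dB01} and in the improvement of Ishihara's second trick earlier in this paper.
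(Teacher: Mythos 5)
Your overall architecture is sound: building \((\lambda_n)\) from \(\mu=\inf f\), choosing \(x_n\) with \(f(x_n)<2^{-n}\) while \(\lambda_n=0\), freezing at a jump, and decoding with the single bit \(\lambda_N\) at the scale \(f(x_\infty)/2\) all work, and your final discontinuity-plus-\WLPO step is correct. The genuine gap is exactly where you flag it, and your proposed fix would not go through. First, the endpoints \(\alpha_n,\beta_n\) of \(V_n=\{x : f(x)<2^{-n}\}\) cannot in general be realised as real numbers from the hypothesis that \(\inf_I f\) exists for every sub-interval: to locate \(\sup V_n\) you would have to decide, for \(a<b\), whether \(V_n\) has a point beyond \(a\) or lies entirely below \(b\); but from \(\inf_{[b,1]}f\) you can only decide ``\(<2^{-n}\)'' against ``\(>2^{-n}-\epsilon\)'', and values in the gap \((2^{-n}-\epsilon,2^{-n})\) defeat any exact location of the sub-level set, so the promotion of inf-locatedness to locatedness of \(V_n\) fails. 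Second, even granting \(\alpha_n,\beta_n\), the step ``monotone bounded sequences \(\alpha_n\uparrow\alpha_\infty\), \(\beta_n\downarrow\beta_\infty\)'' is the constructively invalid monotone convergence theorem, and the argument ``the intersection is empty, hence \(\mathrm{diam}(V_n)\to 0\)'' is a proof by contradiction yielding no modulus, whereas your Cauchy claim needs, for each \(k\), an explicit \(N\).

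The gap is repairable, and more cheaply than you propose: for each \(k\) evaluate \(f\) at the finitely many grid points \(i\cdot 2^{-(k+1)}\), let \(c_k>0\) be their (positive) minimum and pick \(N_k\) with \(2^{-N_k}<c_k\); if \(f(x),f(y)<2^{-N_k}\) and \(\abs*{x-y}>2^{-k}\), then some grid point \(t\) lies between \(x\) and \(y\) and quasi-convexity gives \(f(t)\leqslant\max\{f(x),f(y)\}<c_k\), a contradiction, so \(\abs*{x-y}\leqslant 2^{-k}\). This hands you the Cauchy modulus directly (and, as a bonus, uses only the existence of \(\inf_{[0,1]}f\), not of the infima on all sub-intervals). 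Note that the paper takes a different route which avoids sub-level sets altogether: it runs a bisection maintaining \(\inf f(J_n)=\inf f([0,1])\), exploiting positivity of \(f\) at the midpoint \(m_n\) to make the comparisons of \(\inf f(I_\ell)\) and \(\inf f(I_r)\) with \(f(m_n)/2\) and \(f(m_n)\) sufficiently decidable and quasi-convexity to rule out both halves being small, and then performs a second \(\circledast\)-style diagonalisation to extract the discontinuity witness. Your repaired route is genuinely different and arguably leaner, but as written the middle paragraph does not constitute a constructive proof.
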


\begin{proof}
We inductively define a sequence \((J_n)_{n \geqslant 1}\) of closed subintervals of \([0,1]\) and an increasing binary sequence \((\lambda_n)_{n \geqslant 1}\) such that for all \(n \in \NN\)
\begin{enumerate}
  \item \(J_{n} \subset J_{n-1}\), for $n \neq 0$,
  \item \(\abs*{J_{n}} \leqslant \nicefrac{1}{2^n}\),
  \item \(\inf f([0,1]) = \inf f(J_n) \), and
  \item \(\lambda_n = 1 \implies J_n = [m,m] \land \inf f([0,1]) \geqslant \frac{f(m)}{2}\).
\end{enumerate}
We begin the construction by setting \(J_0 = [0, 1]\) and \(\lambda_0 = 0\). Now suppose we have constructed both sequences up to \(n\). If \(\lambda_n = 1\) we simply set \(\lambda_{n+1} = \lambda_n\) and \(J_{n+1} = J_n\). If \(\lambda_n = 0\) we consider the midpoint \(m_n\) of the interval \(J_n\) and the left and right half subintervals \(I_\ell\) and \(I_r\). Since \(f(m_n) > 0\) we can continue in two cases which are that either \(\inf f(I_\ell) > f(m_n)/2\) or  \(\inf f(I_\ell) < f(m_n)\). 
\begin{itemize}
  \item \(\inf f(I_\ell) > f(m_n)/2\): In that case we also check whether \(\inf f(I_r) > f(m_n)/2\) or \( \inf f(I_r) < \inf f(I_\ell)\). In the second case we have that \(\inf f(J_n) = \inf f(I_r)\), and we can set \(\lambda_{n+1}=0\), and \(J_{n+1} = I_r\). In the first case we have that \[ \inf f(J_n) = \min \menge {\inf f(I_\ell), \inf f(I_r)} > \frac{f(m_n)}{2} \ . \] That means we can set \(\lambda_{n+1} = 1\) and \(J_{n+1} = [m_n,m_n]\).
  \item \(\inf f(I_\ell) < f(m_n)\):  In that case we also check whether \(\inf f(I_r) > \inf f(I_\ell)\) or \( \inf f(I_r) < f(m_n)\). But the second case can be ruled out, since it implies that there is \(a \in I_\ell\) and \(b \in I_r\) such that \(f(a) < f(m_n)\) and \(f(b) < f(m_n)\), which is a contradiction to the quasi-convexity of \(f\). Thus \(\inf f(I_r) > \inf f(I_\ell)\), so \(\inf f(J_n) = \inf f(I_\ell)\) and we can set \(\lambda_{n+1}=0\), and \(J_{n+1} = I_\ell\).
\end{itemize}
This concludes the construction of the sequences \((J_n)\) and \((\lambda_n)\). 

By construction the sequence of the left-endpoints (or the right, or any) is a Cauchy sequence, which therefore converges to a limit \(z\). Again, \(f(z) > 0\), so we can check whether \(\inf f([0,1]) > 0\), or \(\inf f([0,1]) < \nicefrac{f(z)}{2}\).  In the first case we are done, so for the rest of the proof we focus on the second case. In that case there cannot be \(n\) such that \(\lambda_n = 1\), since that would imply that there is an \(m\) such that  \(J_n = [m,m]\), and \(\inf f([0,1]) \geqslant f(\nicefrac{m}{2})\). But since \(z \in J_n\) this means that \(\inf f([0,1]) \geqslant \nicefrac{z}{2}\); a contradiction. Thus \(\fa{n \in \NN}{\lambda_n = 0}\).

We construct, using countable choice, an increasing binary sequence \((\alpha_n)_{n \geqslant 1}\) and a sequence \((y_n)_{n \geqslant 1}\) in \( [0, 1]\) such that
\begin{align*}
	\alpha_n = 0 & \implies  \inf f([0,1]) < \frac{1}{2^n} \ , \\ 
	\alpha_n = 1 & \implies  \inf f([0,1]) > \frac{1}{2^{n+1}},  \ 
\end{align*}
\(y_n \in J_n\), and \(\alpha_n = 0 \implies y < \nicefrac{1}{2^n}\) for every \(n \in \NN\). It is obvious that \(y_n \to z\). However, we now consider the limit \(y\) of the sequence \((a_n) \ast (y_n)\). Very similar to what we did above, since \(f(y) > 0\), we have  \(0 < \inf f([0,1])\) and we are done, or \( \inf f([0,1]) < \nicefrac{f(y)}{2} \). In the second case we must have \(\fa{n \in \NN}{\alpha_n = 0}\). For suppose there is \(n\) such that \(\alpha_n = 1\). Then we can find \(m < n\) such that \(\alpha_m = 0\) and \(\alpha_{m+1} = 1\). So \((a_n) \ast (y_n)\) will from \(m\) on be constant \(y_m\), and so \(y = y_m\). But this yields the contradiction 
\[ \frac{1}{2^{m+1}} < \inf f([0,1]) < \frac{f(y)}{2} = \frac{f(y_m)}{2} < \frac{1}{2^{m+1}}  \ . \] 
The fact that \(\fa{n \in \NN}{\alpha_n = 0}\) implies  \(\inf f([0,1]) = 0\), but also that \(y_n \to z\) and \(f(y_n) \to 0\). Since \(f(z) > 0\) this means that \(f\) is discontinuous, and therefore (see Lemma \ref{Lem:DiscImplOmniscience}) \WLPO holds. 
\end{proof}

As already mentioned above we cannot hope to remove the quasi-convexity, since in RUSS there is a positive, uniformly continuous function $f$ on \([0, 1]\) with  \(\inf f = 0\). We also cannot drop the other assumption.  
\begin{Remark} One cannot remove the assumption of inf-locatedness from Proposition \ref{Pro:impconvex}, since in RUSS the statement that for every  quasi-convex  \(f:[0, 1]\to\RR^+\) there exist \(\varepsilon >0\) such that \(\fa{x\in[0,1]}{f(x) > \varepsilon}\) implies \LPO.
\end{Remark}
\begin{proof}
Let \((r_n)_{n \geqslant 1}\) be a Specker sequence\footnote{A Specker sequence is an (increasing) sequence of rationals in \([0,1]\) that is eventually bounded away from every point in \([0,1]\)---that is for every \(x \in [0,1]\) there exists \(n\) such that \(\abs*{r_i - x} > \frac{1}{2^n}\) for all \(i \geqslant n\). A Specker sequence is a strong counterexample to the Bolzano-Weierstra\ss\xspace Theorem. See \cite[Chapter 3, Theorem 3.1]{dB87} or the original paper \cite{Specker1949} for details.} and \((a_n)_{n \geqslant 1}\) a binary sequence with at most one \(1\). Now let \[ t_{z,\varepsilon}(x) = \max \menge{ 0, 1- \frac{1}{\varepsilon} \abs*{x-z} } \] be the spike function around \(z\) with width \(2 \varepsilon \), and define 
\[f = 1 - \sum a_n \left(1- \frac{1}{n}\right) t_{r_n,\frac{1}{2^n}} \ .  \]
The function \(f\) is either constant \(1\) if \(a_n =0\) for all \(n \in \NN\) or looks like below if \(m\) is such that \(a_m =1 \). Of course, constructively we do not know a priori which of these cases we are in.

\begin{center}
\begin{tikzpicture}[scale=0.8]
\draw[->] (-0.3,0) -- (6,0);
\draw[->] (0,-0.3) -- (0,5.5);
\path (6,0) node[anchor=north] {\(1\)};
\path (4,0) node[anchor=north] {\(r_m\)};
\path (0,0) node[anchor=north east] {\(0\)};
\path (0,5) node[anchor=east] {\(1\)};
\path (0,1) node[anchor=east] {\(\frac{1}{m}\)};
\draw (0,5) -- (3.5,5) -- (4,1) -- (4.5,5) -- (6,5);
\draw[dotted, gray] (0,1) -- (6,1);
\draw[dotted, gray] (4,0) -- (4,5);
\draw [decorate, decoration={brace,amplitude=2pt}] (3.5,5.1) -- (4.5,5.1) node [black,anchor=south, xshift=-10pt, yshift=1pt] {\footnotesize \(\frac{2}{2^m}\)};
\end{tikzpicture}
\end{center}

Note that if we have found \(n\) such that \(a_n =1\), then \(f\) is clearly quasi-convex. More generally let \(a \leqslant b \leqslant c \). Since \((r_n)_{n \geqslant 1}\) is a Specker sequence there exists \(n\) such that \( \abs*{x - r_i} > \frac{1}{2^n} \) for all \(i \geqslant n\) and \(x \in  \menge{a,b,c}\). Now either there is \(j \leqslant n\) such that \(a_j=1\) or not. In the first case we are done by what was said above. In the second case \(f(a) = f(b) = f(c) =1 \), since \( t_{r_i,\frac{1}{2^i}} (x) = 0\) for \(i \geqslant n\) and \(x \in  \menge{a,b,c}\). 

So \(f\) is quasi-convex. Now assume there exist \(\varepsilon >0\) such that \(\fa{x\in[0,1]}{f(x) > \varepsilon}\). Choose \( m \) such that \( \frac{1}{m}<\varepsilon\). There cannot be \(i \geqslant m\) with \(a_i = 1\), since in that case \(f(r_i) = \frac{1}{i} < \frac{1}{m} < \varepsilon \). Hence we can decide 
\[ \fa{n \in \NN}{a_n = 0} \ \lor \ \ex{n \in \NN}{a_n =1}  \ ; \]
thus \LPO holds.
\end{proof}

Following the same lines as \cite{jB16} it seems feasible to extend Proposition \ref{Pro:impconvex} to functions defined on a convex and compact subset \(C\) and thereby obtaining a true generalisation of the main result of that paper.

\section*{Acknowledgements}
This work was supported by a Marie Curie IRSES award from the European Union, with counterpart funding from the Ministry of Research, Science \& Technology of New Zealand, for the project CORCON.

\bibliographystyle{plain}
\bibliography{All}

\end{document}